\documentclass[times,doublespace]{oupau}

\usepackage{times,amsfonts,amsmath,amstext,amsbsy,amssymb,amsopn,amsthm,upref,eucal,amscd}
\usepackage[T1]{fontenc}
\usepackage{color}
\usepackage{verbatim} % added by Gabi
\usepackage{graphicx} % added by Gabi
\usepackage{rotating} % added by Gabi

\usepackage[all,arc,knot]{xy}
\usepackage{mathtools}
\xyoption{arc}

\theoremstyle{remark}
\newtheorem{remark}[theorem]{Remark}
%\newtheorem{Remark}[remark]{Remark}

%------------------------------------------------
%       Redefinitions
%

\renewcommand{\epsilon}{\varepsilon}
\renewcommand{\phi}{\varphi}

%------------------------------------------------

%\input symb_def

\newcommand{\field}[1]{\mathbb{#1}}

\newcommand{\R}{\field{R}}
\newcommand{\N}{\field{N}}
\newcommand{\C}{\field{C}}
\newcommand{\Z}{\field{Z}}

%\renewcommand{\vert}{\,\vert\,}

%------------------------------------------------
%   \cX  = draw X calligraphically.

\newcommand{\cH}{{\mathcal H}}

%
%------------------------------------------------
%       Substitutions
%

\newcommand{\SLZ}{\operatorname{SL}(2,{\mathbb Z})}

% Commands added by Gabi:
 
\newcommand{\wmsa}{{\sigma_a}^{\hspace*{-1.6mm}M_3}}
\newcommand{\wmsb}{{\sigma_b}^{\hspace*{-1.6mm}M_3}}
\newcommand{\Ornicov}{CovM_4}
\newcommand{\Orni}{M_4}
\newcommand{\Ornitwo}{\tilde{M}_4}
\newcommand{\scs}{\scriptsize}

\newcommand{\lcmfs}{\textrm{lcm}}

%------------------------------------------------
%   Some new lengths

\newlength{\halfbls}\setlength{\halfbls}{.5\baselineskip}
\setlength\parindent{0pt}
\hyphenation{ho-lo-mor-phic}

\begin{document}

% Enter full title and short title for running headers
\title{Some examples of isotropic {\boldmath $SL(2,\mathbb{R})$}-invariant subbundles of the Hodge bundle}
\shorttitle{Isotropic $SL(2,\mathbb{R})$-invariant subspaces of Forni's space}

% Enter the publication year and the ID number of the paper

% Author name(s)
\author{Carlos Matheus\affil{1} and Gabriela  Weitze-Schmith\"usen\affil{2}}
% Abbreviated author name for running headers
\abbrevauthor{C. Matheus and G. Weitze-Schmith\"usen}
% Abbreviated author name for first page header
\headabbrevauthor{C. Matheus, and G. Weitze-Schmith\"usen}

\address{%
\affilnum{1}Universit\'e Paris 13, Sorbonne Cit\'e, LAGA, CNRS (UMR 7539), F- 93430, Villetaneuse, France
and
\affilnum{2}Institute for Algebra and Geometry , Karlsruhe Institute of Technology, 76128 Karlsruhe, Germany}

% Address / e-mail address of corresponding author
\correspdetails{weitze-schmithuesen@kit.edu}

% Enter received/revised/accepted dates as necessary
\received{}
\revised{}
\accepted{}

% Enter details of editor communicating this article
\communicated{}

\begin{abstract}
We construct some examples of origamis (square-tiled surfaces) such that the Hodge bundles over the corresponding $SL(2,\mathbb{R})$-orbits on the moduli space admit non-trivial isotropic $SL(2,\mathbb{R})$-invariant subbundles. This answers a question posed to the authors by A. Eskin and G. Forni.
\end{abstract}

\maketitle

\section{Introduction}

The investigation of $SL(2,\mathbb{R})$-invariant probabilities on moduli spaces of Abelian (and/or quadratic) differentials on Riemann surfaces is a fascinating subjects whose applications nowadays include: the description of deviations of ergodic averages of interval exchange transformations, translation flows and billiards (see \cite{Zorich94} and \cite{Forni}), the confirmation of a conjecture of the physicists J. Hardy and J. Weber on the abnormal rate of diffusion of trajectories on typical realisations of Ehrenfest's wind-tree model of Lorenz gases (see \cite{DHL}), and the classification of the commensurability classes of all presently known non-arithmetic ball quotients (see \cite{KappesMoller}). 

After the recent breakthrough work of A. Eskin and M. Mirzakhani \cite{EM}, we have a better understanding of the geometry of $SL(2,\mathbb{R})$-invariant probabilities on moduli spaces of Abelian differentials. Indeed, A. Eskin and M. Mirzakhani showed the ``Ratner theory like statement'' that such measures are always supported on \emph{affine} suborbifolds of the moduli space. In their (long) way to prove this statement, they employed several different arguments inspired from several sources such as the \emph{low entropy method} of M. Einsiedler, A. Katok and E. Lindenstrauss \cite{EKL} and the \emph{exponential drift argument} of Y. Benoist and J. F. Quint \cite{BQ}. Moreover, as an important \emph{preparatory} step for the exponential drift argument, A. Eskin and M. Mirzakhani showed the \emph{semisimplicity} of the so-called \emph{Kontsevich-Zorich cocycle} on the Hodge bundle. 

The derivation in \cite{EM} of this semisimplicity property uses the work of G. Forni \cite{Forni} (see also \cite{FMZ2}) and the study of \emph{symplectic} and \emph{isotropic} $SL(2,\mathbb{R})$-invariant subbundles of the Hodge bundle. Interestingly enough, while \emph{symplectic} $SL(2,\mathbb{R})$-invariant subbundles occur in several known examples (see, e.g., \cite{Bouw:Moeller}, \cite{EKZ:cyclic}, \cite{FMZ2} and \cite{FMZ3}), it is not so easy to put the hands on concrete examples of \emph{isotropic} $SL(2,\mathbb{R})$-invariant subbundles. In fact, the only ``clue'' one had so far was that isotropic $SL(2,\mathbb{R})$-invariant subbundles are confined to the so-called \emph{Forni subbundle} of the Hodge bundle (see \cite{EM}). 

Partly motivated by the scenario of the previous paragraph, A. Eskin and G. Forni (independently) asked us about the actual \emph{existence} of concrete examples of $SL(2,\mathbb{R})$-invariant probabilities on some moduli spaces of Abelian differentials whose Hodge bundles admit \emph{non-trivial} $SL(2,\mathbb{R})$-invariant \emph{isotropic} subbundles.

In this note, we answer affirmatively the question of A. Eskin and G. Forni by exhibiting concrete \emph{square-tiled surfaces} (\emph{origamis}) such that the Hodge bundle over their $SL(2,\mathbb{R})$-orbits have non-trivial $SL(2,\mathbb{R})$-invariant isotropic subbundles. The idea of our construction is very simple:
\begin{itemize}
\item we start with the so-called \emph{Eierlegende Wollmilchsau} origami (see \cite{ForniSurvey} and \cite{HerrlichSchmithuesen}); as it was shown in the \cite{MY}, the Kontsevich-Zorich cocycle over the $SL(2,\mathbb{R})$-orbit $SL(2,\mathbb{R})\cdot (M_3,\omega_3)$ of the Eierlegende Wollmilchsau $(M_3,\omega_3)$ acts by a (very explicit) \emph{finite} group of symplectic matrices on a certain $SL(2,\mathbb{R})$-invariant subbundle of the Hodge bundle over $\mathcal{C}=SL(2,\mathbb{R})\cdot (M_3,\omega_3)$; 
\item in particular, by taking an adequate \emph{abstract} \emph{finite} cover $\widehat{\mathcal{C}}$ of $\mathcal{C}$, one eventually gets that the lift of the Kontsevich-Zorich cocycle to $\widehat{\mathcal{C}}$ acts \emph{trivially} on a certain $SL(2,\mathbb{R})$-invariant subbundle $H$ of the Hodge bundle over $\widehat{\mathcal{C}}$; it follows that \emph{any} $1$-dimensional (equivariant) subbundle of $H$ is an \emph{isotropic} $SL(2,\mathbb{R})$-invariant subbundle of the Hodge bundle over $\widehat{\mathcal{C}}$; however, there is no \emph{a priori} reason that $\widehat{\mathcal{C}}$ corresponds to the support of a $SL(2,\mathbb{R})$-invariant probability in some moduli space of Abelian differentials; 
\item to overcome the difficulty related to the fact that $\widehat{\mathcal{C}}$, we use the results of \cite{Schmithusen} (see also \cite{EllMcR}) to show that \emph{some} of the abstract covers $\widehat{\mathcal{C}}$ described in the previous item can be realised as $SL(2,\mathbb{R})$-orbits of certain square-tiled surfaces.
\end{itemize}

We organise this note as follows. In the next section, we will briefly recall the basic notions of translation surfaces, square-tiled surfaces, Kontsevich-Zorich cocycle and affine diffeomorphisms. Then, in the last section, we state and prove our main results, namely, Theorem \ref{t.F-iso} and \ref{thm-orni}, answering to the question of A. Eskin and G. Forni. Finally, we depict in Appendix \ref{a.A} our ``smallest'' origami satisfying the conclusions of Theorem \ref{t.F-iso}.

%we complete our discussion with 3 Appendices: in Appendix \ref{a.A} we exhibit a concrete square-tiled surface satisfying the hypothesis of Theorem \ref{t.F-iso}, in Appendix \ref{a.B} we briefly sketch how to change the scheme of Section \ref{s.examples} in order to get finite covers of the so-called \emph{Ornithorynque} origami (a close friend of the Eierlegende Wollmilchsau) and in Appendix \ref{a.Aff-inv} we answer another question of A. Eskin to us by showing the \emph{non-existence} of affine group invariant supplement for the absolute homology of the origami of Appendix \ref{a.A} in its relative homology.

\section{Preliminaries}\label{s.preliminaries} 

The basic references for this entire section are the surveys of A. Zorich \cite{ZoSurvey}, and P. Hubert and T. Schmidt \cite{HubSchm}. Also, the reader may find useful to consult the introduction of the article \cite{MY} for further comments on the relationship between the Kontsevich-Zorich cocycle and the action on homology of affine diffeomorphisms of translation surfaces. 

A \emph{translation surface} is the data $(M,\omega)$ of a non-trivial Abelian differential $\omega$ on a Riemann surface $M$. This nomenclature comes from the fact that the local primitives of $\omega$ outside the set $\Sigma$ of its zeroes provides an atlas on $M-\Sigma$ whose changes of coordinates are all translations of the plane $\mathbb{R}^2$. In the literature, these charts are called \emph{translation charts} and an atlas formed by translation charts is called \emph{translation atlas} or \emph{translation (surface) structure}. For later use, we define the \emph{area} $a(M,\omega)$ of $(M,\omega)$ as $a(M,\omega):=(i/2)\int_M\omega\wedge\overline{\omega}$. 

The \emph{Teichm\"uller space} $\widehat{\mathcal{H}_g}$ \emph{of unit area Abelian differentials
 of genus} $g\geq 1$ is the set of unit area translation surfaces $(M,\omega)$ of genus $g\geq 1$ modulo the natural action of the group $\textrm{Diff}_0^+(M)$ of orientation-preserving homeomorphisms of $M$ isotopic to the identity, where $M$ is a fixed topological surface. The  \emph{moduli space} $\mathcal{H}_g$  \emph{of unit area Abelian differentials of genus} $g\geq 1$ is the set of unit area translation surfaces $(M,\omega)$ of genus $g\geq 1$ modulo the natural action of the group $\textrm{Diff}^+(M)$ of orientation-preserving homeomorphisms of $M$, where $M$ is a fixed topological surface. In particular, $\mathcal{H}_g=\widehat{\mathcal{H}_g}/\Gamma_g$ where $\Gamma_g:=\textrm{Diff}^+(M)/\textrm{Diff}_0^+(M)$ is the \emph{mapping class group} (of isotopy classes of orientation-preserving homeomorphisms of $M$).

The point of view of translation structures is useful because it makes clear that $SL(2,\mathbb{R})$ acts on the set of Abelian differentials $(M,\omega)$: indeed, given $h\in SL(2,\mathbb{R})$, we define $h\cdot(M,\omega)$ as the translation surface whose translation charts are given by post-composing the translation charts of $(M,\omega)$ with 
$h$. This action of $SL(2,\mathbb{R})$ descends to $\widehat{\mathcal{H}_g}$ and $\mathcal{H}_g$. The action of the diagonal subgroup $g_t:=\textrm{diag}(e^t, e^{-t})$ of $SL(2,\mathbb{R})$ is the so-called \emph{Teichm\"uller (geodesic) flow}. 

\begin{remark} By collecting together unit area Abelian differentials with orders of zeroes prescribed by a list $\kappa=(k_1,\dots,k_s)$ of positive integers with $\sum k_n=2g-2$, we obtain a subset $\mathcal{H}(\kappa)$ of $\mathcal{H}_g$ called \emph{stratum} in the literature. From the definition of the $SL(2,\mathbb{R})$-action on $\mathcal{H}_g$, it is not hard to check that the strata $\mathcal{H}(\kappa)$ are $SL(2,\mathbb{R})$-invariant.
\end{remark}

The \emph{Hodge bundle} $H_g^1$ over $\mathcal{H}_g$ is the quotient of the trivial bundle $\widehat{\mathcal{H}_g}\times H_1(M,\mathbb{R})$ by the natural action of the mapping-class group $\Gamma_g$ on \emph{both} factors. In this language, the \emph{Kontsevich-Zorich cocycle} $G_t^{KZ}$ is the quotient of the trivial cocycle $\widehat{G_t^{KZ}}:\widehat{\mathcal{H}_g}\times H_1(M,\mathbb{R})\to \widehat{\mathcal{H}_g}\times H_1(M,\mathbb{R})$
$$\widehat{G_t^{KZ}}(\omega,[c])=(g_t(\omega),[c])$$
by the mapping-class group $\Gamma_g$. In the sequel, we will call $G_t^{KZ}$ as KZ cocycle for short. 

For the sake of this note, let us restrict ourselves to the class of translation surfaces $(M,\omega)$ covering (with at most one ramification point) the square flat torus $\mathbb{T}^2=\mathbb{R}^2/\mathbb{Z}^2$  equipped with the Abelian differential induced by $dz$ on $\mathbb{C}=\mathbb{R}^2$. In the literature, these translation surfaces $(M,\omega)$ are called \emph{square-tiled surfaces} or \emph{origamis}. 

The stabiliser $SL(M,\omega)$ -- also known as \emph{Veech group} -- of a square-tiled surface $(M,\omega)\in\mathcal{H}_g$ with respect to the action of $SL(2,\mathbb{R})$ is commensurable to $SL(2,\mathbb{Z})$, and its $SL(2,\mathbb{R})$-orbit is a \emph{closed} subset of $\mathcal{H}_g$ isomorphic to the unit cotangent bundle $SL(2,\mathbb{R})/SL(M,\omega)$ of the hyperbolic surface $\mathbb{H}/SL(M,\omega)$. 

The Veech group $SL(M,\omega)$ consists of the ``derivatives'' (linear parts) of all \emph{affine diffeomorphisms} of $(M,\omega)$, that is, the orientation-preserving homeomorphisms of $M$ fixing the set $\Sigma$ of zeroes of $\omega$ whose local expressions in the translation charts of $(M,\omega)$ are affine maps of the plane. The group of affine diffeomorphisms of $(M,\omega)$ is denoted by $\textrm{Aff}(M,\omega)$ and it is possible to show that $\textrm{Aff}(M,\omega)$ is precisely the subgroup of elements of $\Gamma_g$ stabilising $SL(2,\mathbb{R})\cdot (M,\omega)$ in $\mathcal{H}_g$. The Veech group and the affine diffeomorphisms group are part of the following exact sequence
$$\{id\}\to \textrm{Aut}(M,\omega)\to \textrm{Aff}(M,\omega)\to SL(M,\omega)\to\{id\}$$
where, by definition, $\textrm{Aut}(M,\omega)$ is the subgroup of \emph{automorphisms} of $(M,\omega)$, i.e., the subgroup of elements of $\textrm{Aff}(M,\omega)$ whose linear part is trivial (i.e., identity). 

In this language, the KZ cocycle on the Hodge bundle over the $SL(2,\mathbb{R})$-orbit of $(M,\omega)$ is intimately related to the action on homology of $\textrm{Aff}(M,\omega)$. Indeed, since $\textrm{Aff}(M,\omega)\subset \Gamma_g$ is the stabiliser of $SL(2,\mathbb{R})\cdot (M,\omega)$ in $\mathcal{H}_g=\widehat{\mathcal{H}_g}/\Gamma_g$, we have that the KZ cocycle is the quotient of the trivial cocycle 
$$g_t\times id: \widehat{\mathcal{H}_g}\times H_1(M,\mathbb{R})\to \widehat{\mathcal{H}_g}\times H_1(M,\mathbb{R})$$
by $\textrm{Aff}(M,\omega)$.

For later use, we observe that, given a square-tiled surface $p:(M,\omega)\to(\mathbb{T}^2,dz)$ (where $p$ is a finite cover ramified precisely over $0\in\mathbb{T}^2$), the KZ cocycle, or equivalently $\textrm{Aff}(M,\omega)$, preserves the decomposition 
$$H_1(M,\mathbb{R})=H_1^{st}\oplus H_1^{(0)}(M,\mathbb{R}),$$
where  $H_1^{(0)}(M,\mathbb{R}):=\textrm{Ker}(p_*)$ and $H_1^{st}:=(p_*)^{-1}(H_1(\mathbb{T}^2,\mathbb{R}))$.
Here we denote by $(p_*)^{-1}(H_1(\mathbb{T}^2,\mathbb{R}))$ the isomorphic preimage of 
$H_1(\mathbb{T}^2,\mathbb{R})$ via $p_*$ which is the orthogonal to $H_1^{(0)}(M,\mathbb{R})$ with respect to the
intersection form.

Closing this preliminary section, we recall that, given a finite ramified covering $X_1\to X_2$ of Riemann surfaces, the \emph{ramification data} of a point $p\in X_2$ is the list of ramification indices of all pre-images of $p$ counted with multiplicities.

\section{Isotropic \boldmath{$SL(2,\mathbb{R})$}-invariant subbundles of Hodge bundle}\label{s.examples}

This section is divided into three parts. In Subsection \ref{ss.2.1}, we will show an ``abstract'' criterion (cf. Proposition \ref{p.2} below) leading to square-tiled surfaces such that the Hodge bundle over their $SL(2,\mathbb{R})$-orbits have $SL(2,\mathbb{R})$-invariant isotropic subbundles. Then, in  Subsection \ref{ss.2.2}, we will use the ``abstract'' criterion to exhibit (cf. Definition \ref{bigone}) a square-tiled surface with genus 15 and 512 squares covering the special (\emph{Eierlegende Wollmilchsau}) origami $(M_3,\omega_{(3)})$ such that the Hodge bundle over its $SL(2,\mathbb{R})$-orbit has $SL(2,\mathbb{R})$-invariant isotropic subbundles.  Finally, in Subsection \ref{ss.Orni} we present an infinite family of origamis with this
property. They come from coverings to an origami $(M_4,\omega_{(4)})$ also called {\em ornithorynque origami} having
similar properties as $(M_3, \omega_{(3)})$.

\subsection{``Abstract'' examples of  \boldmath{$SL(2,\mathbb{R})$}-invariant isotropic subbundles}\label{ss.2.1}
Let $(M_3, \omega_{(3)})$ be the \emph{Eierlegende Wollmilchsau} origami, that is, the translation surface associated to the Riemann surface $M_{3}$ defined by the algebraic equation
$$\{y^4=x^3-x\}$$
and the Abelian differential $\omega_{(3)}=c^{-1}dx/y^2$ where $c=\int_1^{\infty}\frac{dx}{\sqrt{x^3-x}}=\frac{(\Gamma(1/4))^2}{2\sqrt{2\pi}}$. Note that $M_{3}$ is a genus $3$ Riemann surface and $\omega_{(3)}$ has 4 simple zeroes at the points $x_1,\dots, x_4\in M_{3}$ over $0,1,-1,\infty$.

The square-tiled surface structure on $(M_{3},\omega_{(3)})$ becomes apparent from the natural covering $h(x,y)=(x,y^2)=(x,z)$ mapping $(M_{3},\omega_{(3)})$ into the torus (elliptic curve) 
$$T=\{z^2=x^3-x\}$$
equipped with the (unit area) Abelian differential $c^{-1}dx/z$. 

The KZ cocycle over the $SL(2,\mathbb{R})$-orbit of $(M_{3},\omega_{(3)})$, or more precisely, the homological action of the group $\textrm{Aff}(M_{3},\omega_{(3)})$ of affine diffeomorphisms of $(M_{3},\omega_{(3)})$ was analysed in details in \cite{MY}. Here, we will need the following fact proved there. Denote by $\textrm{Aff}_{(1)}(M_3,\omega_{(3)})$ the subgroup of affine elements fixing \emph{each} zero of $\omega_{(3)}$ and consider the canonical derivative morphism from $\textrm{Aff}(M_{3},\omega_{(3)})$ to the Veech group $SL(2,\mathbb{Z})$ of $(M_{3},\omega_{(3)})$. In this setting, let $\Gamma_{EW}(4)$ be the subgroup consisting of elements of  $\textrm{Aff}_{(1)}(M_{3},\omega_{(3)})$ whose image in $SL(2,\mathbb{Z})$ under the derivative morphism belong to the principal congruence subgroup $\Gamma(4)$ of level $4$ of $SL(2,\mathbb{Z})$. 

\begin{proposition}[cf. Proposition 7.1 of \cite{HooperWeiss}, or Lemma 2.8 of \cite{MY}]\label{p.1}
The elements of $\Gamma_{EW}(4)$ act by $\pm \textrm{id}$ on $H_1^{(0)}(M_{3},\mathbb{R})$.
\end{proposition}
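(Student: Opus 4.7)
The plan is to compute the image of the homological representation
\[
\rho : \textrm{Aff}(M_3, \omega_{(3)}) \to \mathrm{Sp}\bigl(H_1^{(0)}(M_3, \mathbb{R})\bigr) \cong \mathrm{Sp}(4, \mathbb{R})
\]
explicitly enough to read off the action of $\Gamma_{EW}(4)$. Recall that the Eierlegende Wollmilchsau has automorphism group $\textrm{Aut}(M_3, \omega_{(3)}) \simeq Q_8$, the quaternion group, which already gives a $4$-dimensional representation on $H_1^{(0)}(M_3, \mathbb{R})$. I would first fix a basis of $H_1^{(0)}(M_3, \mathbb{R})$ adapted to the $Q_8$-isotypic decomposition; in this basis the action of $\textrm{Aut}(M_3, \omega_{(3)})$ is given by an explicit (and well-known) list of $4\times 4$ matrices.

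Second, I would lift the standard parabolic generators $T = \begin{pmatrix} 1 & 1 \\ 0 & 1 \end{pmatrix}$ and $T' = \begin{pmatrix} 1 & 0 \\ 1 & 1 \end{pmatrix}$ of $SL(2,\mathbb{Z})$ to affine diffeomorphisms of $(M_3, \omega_{(3)})$, realised as products of Dehn multitwists along the core curves of the horizontal and vertical cylinder decompositions respectively. Those core curves are combinatorially explicit in the $8$-square presentation of $(M_3, \omega_{(3)})$, so one can write them as chains in $H_1(M_3, \mathbb{Z})$ and apply the Picard--Lefschetz formula to obtain the matrices of these lifts acting on $H_1^{(0)}$. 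Composing with suitable automorphisms in $Q_8$, I obtain representatives that fix each zero, i.e.\ live in $\textrm{Aff}_{(1)}(M_3, \omega_{(3)})$.

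Third, the key observation is that on $H_1^{(0)}(M_3,\mathbb{R})$ one computes $\rho(T)^2$ and $\rho(T')^2$ to be automorphisms (in fact $\pm \textrm{id}$ times elements of $Q_8$), so $\rho$ factors through a finite quotient $G$ of $\textrm{Aff}(M_3,\omega_{(3)})$ and the induced homomorphism $G/\{\pm \textrm{id}\} \to SL(2,\mathbb{Z})/\{\pm \textrm{id}\}$ further factors through $PSL(2,\mathbb{Z}/4\mathbb{Z})$. A finite check using relations in $SL(2,\mathbb{Z}/4\mathbb{Z})$ (there are only a handful of cosets to inspect) then shows that the preimage in $G$ of $\Gamma(4) \pmod{\{\pm \textrm{id}\}}$ is exactly $\{\pm \textrm{id}\}$. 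Since $\Gamma_{EW}(4)$ sits in $\textrm{Aff}_{(1)}$ with derivative in $\Gamma(4)$, its $\rho$-image lands in this preimage, yielding the claim.

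The only real obstacle is the bookkeeping in step two: keeping track of the correct lifts of $T$ and $T'$ (choosing representatives that fix the zeros, not merely the set of zeros) and avoiding sign errors in the Picard--Lefschetz computation. Once the explicit matrices for $\rho(T)$, $\rho(T')$ and $\rho(Q_8)$ are in hand, everything else is a finite verification; moreover these matrices have been written down in \cite{MY} and \cite{HooperWeiss}, so I would rely on their presentations and only need to extract the $\Gamma(4)$ statement from them.
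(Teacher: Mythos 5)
The paper itself contains no proof of Proposition \ref{p.1}: the statement is imported wholesale, with the text ``we will need the following fact proved there'' pointing to Proposition 7.1 of \cite{HooperWeiss} and Lemma 2.8 of \cite{MY}. Your plan is, in substance, a reconstruction of the argument in those cited sources: identify $\textrm{Aut}(M_3,\omega_{(3)})\simeq Q$ (the quaternion group), compute the representation $\rho$ of $\textrm{Aff}(M_3,\omega_{(3)})$ on the $4$-dimensional space $H_1^{(0)}(M_3,\mathbb{R})$ on explicit generators, observe that the image is finite, and then do a finite verification that elements of $\textrm{Aff}_{(1)}$ with derivative in $\Gamma(4)$ land in $\{\pm\textrm{id}\}$. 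So the route is the right one and the same one as the proof the paper is leaning on.

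Two technical points in your step two and three need repair, though. First, on the Eierlegende Wollmilchsau the horizontal (and vertical) cylinders have circumference $4$ and height $1$, hence modulus $1/4$, so the affine lift of $T=\begin{pmatrix}1&1\\0&1\end{pmatrix}$ is \emph{not} a product of Dehn multitwists along the core curves; only the element with derivative $\begin{pmatrix}1&4\\0&1\end{pmatrix}$, i.e.\ the fourth power, is the full multitwist. Picard--Lefschetz therefore hands you $\rho(T)^4$, not $\rho(T)$, and a fourth power does not determine $\rho(T)$ inside a finite group. The fix --- and what \cite{MY} actually do --- is to compute the action of the lift of $T$ directly from its combinatorial (permutation) action on the squares and edge cycles of the origami. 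Second, your ``induced homomorphism $G/\{\pm\textrm{id}\}\to SL(2,\mathbb{Z})/\{\pm\textrm{id}\}$'' is not well-defined: the derivative map is defined on $\textrm{Aff}$, not on its homological image $G$, and $\ker(D)=Q$ acts nontrivially on $H_1^{(0)}$ (only $\pm 1\in Q$ act as $\pm\textrm{id}$; the elements $\pm i,\pm j,\pm k$ do not act by scalars). The correct finite check, as in the cited proofs, is that the subgroup $\{f\in\textrm{Aff}_{(1)}(M_3,\omega_{(3)}) : D(f)\in\Gamma(4)\}$ is mapped by $\rho$ into $\{\pm\textrm{id}\}$; note also that the hypothesis that $f$ fixes \emph{each} zero cannot be discarded or always arranged by composing with deck transformations (a deck transformation other than $\pm1$ fixes no zero, so it cannot correct an arbitrary permutation of the four zeroes), which is exactly why $\Gamma_{EW}(4)$ is defined inside $\textrm{Aff}_{(1)}$. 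With these two repairs your outline matches the cited computation.
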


%\begin{proposition}[Lemma 2.8 of \cite{MY}]\label{p.1}
%The elements of $\Gamma_{EW}(4)$ act by $\pm \textrm{id}$ on $H_1^{(0)}(M_{3},\mathbb{R})$.
%\end{proposition}

Using this fact, one can build examples of $SL(2,\mathbb{R})$-invariant isotropic subbundles of the Hodge bundle 
by taking \emph{adequate} finite (ramified) coverings of the Eierlegende Wollmilchsau $(M_3,\omega_{(3)})$ 
(see Figure~\ref{ews}).

\begin{figure}[h]
\begin{center}
  \setlength{\unitlength}{1.3cm}
  \begin{picture}(6,2)
    \put(0,0){\framebox(1,1){1}}
    \put(1,0){\framebox(1,1){i}}
    \put(2,0){\framebox(1,1){-1}}
    \put(3,0){\framebox(1,1){-i}}
    \put(3,1){\framebox(1,1){-k}}
    \put(4,1){\framebox(1,1){-j}}
    \put(5,1){\framebox(1,1){k}}
    \put(6,1){\framebox(1,1){j}}
    \put(-.11,-.11){\framebox(.22,.22){}}
    \put(1.89,-.11){\framebox(.22,.22){}}
    \put(3.89,-.11){\framebox(.22,.22){}}
    \put(3.89,1.89){\framebox(.22,.22){}}
    \put(5.89,1.89){\framebox(.22,.22){}}
    \put(1,0){\circle{.2}}
    \put(3,0){\circle{.2}}
    \put(3,2){\circle{.2}}
    \put(5,2){\circle{.2}}
    \put(7,2){\circle{.2}}
    \put(0,1){\circle*{.2}}
    \put(2,1){\circle*{.2}}
    \put(4,1){\circle*{.2}}
    \put(6,1){\circle*{.2}}    
    \put(4.89,0.89){\rule{2.2mm}{2.2mm}}
    \put(0.89,0.89){\rule{2.2mm}{2.2mm}}
    \put(2.89,0.89){\rule{2.2mm}{2.2mm}}
    \put(6.89,0.89){\rule{2.2mm}{2.2mm}}
    \put(.45,.85){\line(1,2){.15}}
    \put(6.45,.85){\line(1,2){.15}}
    \put(1.44,.85){\line(1,2){.15}}
    \put(1.5,.85){\line(1,2){.15}}
    \put(5.44,.85){\line(1,2){.15}}
    \put(5.5,.85){\line(1,2){.15}}
    \put(2.4,.85){\line(1,2){.15}}
    \put(2.45,.85){\line(1,2){.15}}
    \put(2.5,.85){\line(1,2){.15}}
    \put(4.4,.85){\line(1,2){.15}}
    \put(4.45,.85){\line(1,2){.15}}
    \put(4.5,.85){\line(1,2){.15}}
    \put(3.55,1.85){\line(-1,2){.15}}
    \put(1.55,-.15){\line(-1,2){.15}}
    \put(4.5,1.85){\line(-1,2){.15}}
    \put(4.57,1.85){\line(-1,2){.15}}
    \put(0.5,-.15){\line(-1,2){.15}}
    \put(0.57,-.15){\line(-1,2){.15}}
    \put(5.5,1.85){\line(-1,2){.15}}
    \put(5.55,1.85){\line(-1,2){.15}}
    \put(5.6,1.85){\line(-1,2){.15}}
    \put(3.5,-.15){\line(-1,2){.15}}
    \put(3.55,-.15){\line(-1,2){.15}}
    \put(3.6,-.15){\line(-1,2){.15}}
    \put(6.47,1.85){\line(-1,2){.15}}
    \put(6.53,1.85){\line(-1,2){.15}}
    \put(6.58,1.85){\line(-1,2){.15}}
    \put(6.63,1.85){\line(-1,2){.15}}
    \put(2.48,-.15){\line(-1,2){.15}}
    \put(2.53,-.15){\line(-1,2){.15}}
    \put(2.58,-.15){\line(-1,2){.15}}
    \put(2.63,-.15){\line(-1,2){.15}}

  \end{picture}
\caption{The origami ($M_3$, $\omega_{(3)}$)}\label{ews}
\end{center}
\end{figure}
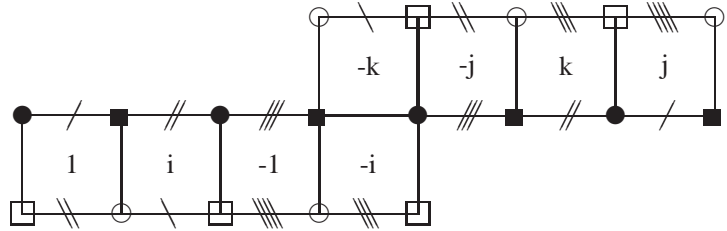

More precisely, the Eierlegende Wollmilchsau $(M_3,\omega_{(3)})$ can be thought as the collection of $8$ unit squares 
$sq(g)$ indexed by the elements $g\in Q=\{\pm1,\pm i,\pm j, \pm k\}$ of quaternion group glued together by the following rule: the right-hand side of $sq(g)$ is glued by translation to the left-hand side of $sq(gi)$ and the top side of $sq(g)$ is glued by translation to the bottom side of $sq(gj)$. In this language, we have a natural projection $\pi:(M_3,\omega_{(3)})\to\mathbb{R}^2/\mathbb{Z}^2$ of degree 8.
%factoring as $(M_3,\omega_{(3)})\to\mathbb{R}^2/(2\mathbb{Z}^2)\to\mathbb{R}^2/\mathbb{Z}^2$. In other words, the 4 zeroes of $\omega_{(3)}$ project under $\pi$ to the four $2$-torsion points of $\mathbb{R}^2/\mathbb{Z}^2$. 
Consider now $p:(X,\omega)\to (M_3,\omega_{(3)})$ a finite ramified covering and let's denote by $q:(X,\omega)\to 
E = \mathbb{R}^2/\mathbb{Z}^2$ the natural covering $q = 
\pi\circ p$. 

The next proposition puts constraints on the group $\textrm{Aff}(X,\omega)$ of affine diffeomorphisms of $(X,\omega)$ depending on the ramification data at certain points of $(M_3,\omega_{(3)})$.

\begin{proposition}\label{p.2} Consider the following conditions on $(X,\omega)$:
\begin{itemize}
\item[A)] 
  The covering $q:(X,\omega)\to\mathbb{R}^2/\mathbb{Z}^2 = E$ is ramified at most 
  over 4-division points.
%the point 
%  $\infty = (0,0) \in \mathbb{R}^2/\mathbb{Z}^2$ and over  the two 
%  $4$-torsion points $(1/4,0)$ and $(0,1/4)$, and  
%  it has no further ramification points.
\item[B)] 
  The covering $q$  is ramified above  
  $(0,0)$, $(3/4,0)$ and $(0,1/4)$. The ramification data 
  at $(0,0)$, $(3/4,0)$ and $(0,1/4)$ are distinct and different from
  all other ramification data.
\item[C)] 
  The ramification data of $p:(X,\omega)\to (M_3,\omega_{(3)})$ at the four zeroes of $\omega_{(3)}$ are distinct.
%\item[(D)] 
%  The ramification data of $p$ at one preimage of $(1/4,0)$ is different
%  from those at the other preimages.
\end{itemize}
It holds:
\begin{itemize}
\item[i)]
  If $(X,\omega)$ satisfies A) and B), then any  affine diffeomorphism $f\in\textrm{Aff}(X,\omega)$
  descends via $p$ to an affine diffeomorphism $g\in\textrm{Aff}(M_3,\omega_{(3)})$ (i.e., $g\circ p = p\circ f$).
  Furthermore $D(f) = D(g)$ lies in $\Gamma(4)$.
\item[ii)]
  If $(X,\omega)$ in addition satisfies C), 
  %and $ g\in\textrm{Aff}(M_3,\omega_{(3)})$
  %is a descend of $f$ to $(M_3,\omega_{(3)})$ as above, 
  then $g$ fixes the four zeroes 
  of $\omega_{(3)}$, that is, $g\in\textrm{Aff}_{(1)}(M_3,\omega_{(3)})$. We have in particular
  that $g$ lies in $\Gamma_{EW}(4)$.
%$\Gamma(M_3,\omega_{(3)})$.
%\item[iii)] 
%  If in addition (D) is satisfied, then the descend $g$ is unique.
\end{itemize}
\end{proposition}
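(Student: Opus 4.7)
The plan is to first push $f$ down to the torus $E$ via $q$, use A) and B) to pin down its derivative $A$ modulo $4$, and then lift back up through $\pi\colon M_3\to E$ to produce $g$.

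I would begin with the observation that any $f\in\textrm{Aff}(X,\omega)$ induces a well-defined affine diffeomorphism $\bar f\colon E\to E$ with $q\circ f=\bar f\circ q$ and $D(\bar f)=A:=D(f)$. Indeed, if $q(x_1)=q(x_2)$, then the difference of the developing images in local translation charts lies in $\mathbb{Z}^2$; since $A\in SL(2,\mathbb{Z})$ preserves $\mathbb{Z}^2$, this remains true after applying $f$, so $f$ sends $q$-fibers to $q$-fibers. The induced map $\bar f$ then permutes the ramification locus $R\subset E$ of $q$ and preserves the ramification data at each point of $R$, because $f$ is a translation-preserving bijection of $X$ and hence preserves cone angles.

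Now combining with A) and B): $R$ is contained in the $4$-torsion of $E$, and among all of its points, $(0,0)$, $(3/4,0)$ and $(0,1/4)$ carry pairwise distinct ramification data that differ from every other datum occurring in $R$. Consequently $\bar f$ must fix each of these three points individually. Fixing $(0,0)$ forces $\bar f$ to be linear, $\bar f(v)=Av$. Writing $A=\begin{pmatrix}a&b\\c&d\end{pmatrix}$, the conditions $A(3/4,0)^t\equiv(3/4,0)^t$ and $A(0,1/4)^t\equiv(0,1/4)^t\pmod{\mathbb{Z}^2}$ immediately yield $a\equiv d\equiv 1\pmod{4}$ and $b\equiv c\equiv 0\pmod{4}$; that is, $A\in\Gamma(4)$.

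Since $\Gamma(4)\subset SL(M_3,\omega_{(3)})=SL(2,\mathbb{Z})$, I can lift $\bar f$ to some $g_0\in\textrm{Aff}(M_3,\omega_{(3)})$ with $\pi\circ g_0=\bar f\circ\pi$. Then $\pi\circ(g_0\circ p)=\bar f\circ q=q\circ f=\pi\circ(p\circ f)$, so $g_0\circ p$ and $p\circ f$ differ by a deck transformation $\sigma$ of $\pi$; since $X$ is connected and $\textrm{Deck}(\pi)$ is discrete, $\sigma$ is a constant element of $\textrm{Aut}(M_3,\omega_{(3)})$. Setting $g:=\sigma^{-1}\circ g_0$ gives $g\circ p=p\circ f$ with $D(g)=A\in\Gamma(4)$, proving i). For ii), the four zeroes $x_1,\dots,x_4$ of $\omega_{(3)}$ all lie over $(0,0)\in E$, which $\bar f$ fixes, so $g$ permutes them. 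The identity $g\circ p=p\circ f$ together with the fact that $f$ preserves cone angles on $X$ implies that the $p$-ramification multisets at $x_i$ and at $g(x_i)$ coincide; condition C) then forces $g(x_i)=x_i$ for every $i$, so $g\in\textrm{Aff}_{(1)}(M_3,\omega_{(3)})$ and hence $g\in\Gamma_{EW}(4)$. The main technical point is the initial descent of $f$ through $q$; once $\bar f$ is in hand, the rest is bookkeeping with ramification multisets.
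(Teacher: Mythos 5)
Your overall architecture coincides with the paper's: descend $f$ through $q$ to the torus, use B) and the invariance of ramification data to force the induced map to fix $(0,0)$, $(3/4,0)$ and $(0,1/4)$, read off the congruences $a\equiv d\equiv 1$, $b\equiv c\equiv 0 \pmod 4$, then transfer the map to $M_3$, and finally use C) exactly as the paper does for part ii). Your construction of $g$ in the middle step is a legitimate variant: the observation that two lifts of the same map through the normal covering $\pi$ differ by a single deck transformation (constant by connectedness) is precisely the content of \cite[Lemma 3.13]{MatheusSchmithuesen}, which the paper cites, and your route via ``choose any $g_0\in\textrm{Aff}(M_3,\omega_{(3)})$ with derivative $A$, using $SL(M_3,\omega_{(3)})=SL(2,\mathbb{Z})$'' is exactly the argument the paper runs in the Ornithorynque case (Proposition~\ref{jwe}), whereas for the Eierlegende Wollmilchsau it instead lifts $\bar f$ from the punctured torus via \cite[Prop. 1.2]{HerrlichSchmithuesen}. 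One small point you elide: you assert $\pi\circ g_0=\bar f\circ\pi$ but only justify the existence of \emph{some} affine $g_0$ with $D(g_0)=A$; you still need the one-line check that the descend of $g_0$ to $E$ coincides with $\bar f$ (it fixes $(0,0)$ because $g_0$ permutes the zeroes of $\omega_{(3)}$, which form the fibre of $\pi$ over the unique branch value, and an affine torus map is determined by its derivative together with one value). This is easily filled, and the paper spells out the analogous check in the proof of Proposition~\ref{jwe}.

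The genuine gap is in your first step, the descent of $f$ through $q$. Your fibre-preservation argument invokes ``since $A\in SL(2,\mathbb{Z})$ preserves $\mathbb{Z}^2$'' --- but the integrality of $A=D(f)$ is exactly what needs proving, so as written the step is circular. What is automatic is only that $A$ preserves the \emph{period lattice} $\Lambda$ of $\omega$, hence that $f$ descends to the torus $\mathbb{R}^2/\Lambda$; descending through the \emph{given} map $q$ to $E=\mathbb{R}^2/\mathbb{Z}^2$ requires $A\mathbb{Z}^2=\mathbb{Z}^2$, and this can genuinely fail when $\Lambda\subsetneq\mathbb{Z}^2$: a flat torus covering $E$ by an isogeny admits affine diffeomorphisms with non-integral derivative which do not descend. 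The present situation is of precisely this delicate type: the periods of the Eierlegende Wollmilchsau generate $2\mathbb{Z}^2$, not $\mathbb{Z}^2$ (every closed loop on $M_3$ has even horizontal and vertical winding numbers, since the covering $\pi$ is given by $F_2\to Q$ and $Q^{ab}\cong(\mathbb{Z}/2)^2$; equivalently, $\pi$ factors through the degree-$4$ isogeny $T=\{z^2=x^3-x\}\to E$ mentioned in Section~\ref{ss.2.1}), so $\Lambda_X\subseteq 2\mathbb{Z}^2$, and a matrix preserving a sublattice of $2\mathbb{Z}^2$ need not lie in $SL(2,\mathbb{Z})$ (e.g. $\bigl(\begin{smallmatrix}1&1/2\\0&1\end{smallmatrix}\bigr)$ preserves the lattice spanned by $(2,0)$ and $(0,4)$). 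The conclusion does hold --- if $\Lambda_X$ is a scalar multiple $c\,\mathbb{Z}^2$ the invariance of $\Lambda_X$ does give $A\in SL(2,\mathbb{Z})$, and in general one argues as in \cite[Prop. 2.6]{Schmithusen} --- but this is the one nontrivial analytic input of part i), which the paper deliberately outsources to that citation. You should either cite it likewise or supply an actual proof of $D(f)\in SL(2,\mathbb{Z})$; your current one-line justification assumes the conclusion.
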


The following basic fact about ramified coverings will be crucial 
in the proof of Proposition~\ref{p.2}. Let $p:X \to Y$
be a ramified covering of topological surfaces. If $f:X \to X$ 
is a homeomorphism of $X$ which descends to $g: Y \to Y$ via
$p$, i.e. 
$g \circ p = p \circ f$, then $f$ preserves ramification indices
and $g$ preserves ramification data.

\begin{proof}[Proof of Proposition \ref{p.2}]
{\bf i)} 
Suppose that A) and B) hold. Any affine diffeomorphism $f$
of $X$ descends via $q$
to some affine diffeomorphism $h:E \to E$ (see e.g. \cite[Prop. 2.6]{Schmithusen} 
for a detailed proof). It follows from A) and B) that $h$ fixes the point $\infty = (0,0)$, 
$(3/4,0)$ and $(0,1/4)$ pointwise, thus $D(f) = D(h)$
is in $\Gamma(4)$.\\
Since $h$ fixes $\infty$, it is also a homeomorphism of the 
once-punctured torus $E^* = E\backslash\{\infty\}$.
We have from \cite[Prop. 1.2]{HerrlichSchmithuesen}
that any affine diffeomorphism of $E$, which restricts to one of $E^*$,
lifts to $M_3$. More precisely,
since the degree 8 covering $M_3 \to E$ is normal, there are 8 lifts.
Again since this covering is normal, one of the lifts is a descend of $f$,
see e.g. \cite[Lemma 3.13]{MatheusSchmithuesen}.
Thus $f$ descends via $p$ to some affine homeomorphism $g$ of $(M_3, \omega_{(3)})$, 
and $D(g) = D(f) = D(h)$ is in $\Gamma(4)$.\\[2mm]
{\bf ii)}
Suppose that A), B) and C) hold. The descend $g$ found in i) 
is an affine homeomorphism and therefore permutes the zeroes of $\omega_{(3)}$.
It directly follows from C) that it even fixes them pointwise
and thus is in $\Gamma_{EW}(4)$.
\end{proof}

%\begin{remark}
%Observe that Proposition~\ref{p.2} works just in the same way, if we replace the point $(1/4,0)$ in
%the statement by the point $(3/4,0)$. We use it in this way in the explicit example 
%which we construct in Section~\ref{ss.2.2}.
%Furthermore, we can multiply the whole translation structure with the constant $4$. Then $X$
%covers the torus coming from the unit square and is an origami.
%\end{remark}

By putting together Propositions \ref{p.1} and \ref{p.2}, it is not difficult to get examples of non-trivial $SL(2,\mathbb{R})$-invariant isotropic subbundles of the Hodge bundle over $SL(2,\mathbb{R})$-orbits of origamis: 

\begin{theorem}\label{t.F-iso} Let $p:(X,\omega)\to(M_3,\omega_{(3)})$ be a finite ramified covering of the Eierlegende Wollmilchsau $(M_3,\omega_{(3)})$ satisfying the conditions A), B) and C) stated in Proposition \ref{p.2}. Then, the Hodge bundle over the $SL(2,\mathbb{R})$-orbit of $(X,\omega)$ contains non-trivial $SL(2,\mathbb{R})$-invariant isotropic subbundles. 
\end{theorem}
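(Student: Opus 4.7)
The plan is to construct an $\textrm{Aff}(X,\omega)$-invariant subspace $V\subset H^1(X,\mathbb{R})$ on which every affine diffeomorphism acts by the scalar $\pm\textrm{id}$. Any line $L\subset V$ will then automatically be $\textrm{Aff}(X,\omega)$-invariant (scalar actions preserve every subspace) and isotropic for the cup-product symplectic form (any one-dimensional subspace is isotropic), and hence will yield a non-trivial $SL(2,\mathbb{R})$-invariant isotropic subbundle of the Hodge bundle via the dictionary recalled in Section~\ref{s.preliminaries} between $\textrm{Aff}(X,\omega)$-invariant subspaces of cohomology and $SL(2,\mathbb{R})$-invariant subbundles over $SL(2,\mathbb{R})\cdot(X,\omega)$.

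The candidate is $V := p^{*}\bigl(H^{1,(0)}(M_3,\mathbb{R})\bigr)$, where $H^{1,(0)}(M_3,\mathbb{R})\subset H^1(M_3,\mathbb{R})$ denotes the four-dimensional Poincar\'e dual of $H_1^{(0)}(M_3,\mathbb{R})$ (equivalently, the symplectic complement of $\pi^{*}H^1(\mathbb{T}^2,\mathbb{R})$). Since $p_{*}\circ p^{*}=\deg(p)\cdot\textrm{id}$, the pullback $p^{*}$ is injective and $V$ is four-dimensional, hence non-zero. For invariance, given $f\in\textrm{Aff}(X,\omega)$, Proposition~\ref{p.2} supplies a descent $g\in\Gamma_{EW}(4)\subset\textrm{Aff}(M_3,\omega_{(3)})$ with $g\circ p=p\circ f$, whence the naturality identity $f^{*}\circ p^{*}=p^{*}\circ g^{*}$; since $g^{*}$ preserves $H^{1,(0)}(M_3,\mathbb{R})$, the map $f^{*}$ preserves $V$. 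For the scalar action, Proposition~\ref{p.1} gives $g_{*}=\pm\textrm{id}$ on $H_1^{(0)}(M_3,\mathbb{R})$; since $g$ is an orientation-preserving homeomorphism, Poincar\'e duality transports this to $g^{*}=\pm\textrm{id}$ on $H^{1,(0)}(M_3,\mathbb{R})$, and therefore $f^{*}|_{V}=\pm\textrm{id}$.

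I do not anticipate a genuine obstacle here: once the $\pm\textrm{id}$ action on $V$ is established, any line $L\subset V$ simultaneously achieves $\textrm{Aff}(X,\omega)$-invariance and isotropy, which is exactly what is needed. The two formal points deserving a word of justification are the Poincar\'e-duality step from the homological action of Proposition~\ref{p.1} to the cohomological action used above (immediate for orientation-preserving homeomorphisms) and the identification of $SL(2,\mathbb{R})$-invariant subbundles with $\textrm{Aff}(X,\omega)$-invariant subspaces of $H^1(X,\mathbb{R})$; both are standard consequences of the description of the Kontsevich-Zorich cocycle recalled in Section~\ref{s.preliminaries}.
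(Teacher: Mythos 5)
Your proposal is correct and takes essentially the same route as the paper's proof: descend each $f\in\textrm{Aff}(X,\omega)$ to $g\in\Gamma_{EW}(4)$ via Proposition~\ref{p.2}, invoke Proposition~\ref{p.1} to get the $\pm\textrm{id}$ action on the lifted four-dimensional subspace, and conclude that every one-dimensional equivariant subbundle inside it is automatically $SL(2,\mathbb{R})$-invariant and isotropic. The only cosmetic difference is that you work with the pullback $p^{*}$ on cohomology, whereas the paper takes the isomorphic preimage $p^{-1}\bigl(H_1^{(0)}(M_3,\mathbb{R})\bigr)$ in homology (the intersection of $\ker q_{*}$ with the symplectic orthogonal of $\ker p_{*}$); the two are identified by the Poincar\'e-duality translation you note.
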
 

\begin{proof} Given a finite ramified covering $p:(X,\omega)\to(M_3,\omega_{(3)})$, denote by $H:=p^{-1}(H_1^{(0)}(M_3,\mathbb{R}))$ the $4$-dimensional subbundle of the Hodge bundle over $SL(2,\mathbb{R})\cdot (X,\omega)$ obtained by lifting the $4$-dimensional subbundle $H_1^{(0)}(M_3,\mathbb{R})$ of the Hodge bundle over $SL(2,\mathbb{R})\cdot(M_3,\omega_{(3)})$.
Here similarly as before $p^{-1}(H_1^{(0)}(M_3,\mathbb{R}))$ denotes the isomorphic
preimage of $H_1^{(0)}(M_3,\mathbb{R})$ which is the intersection of the kernel of $q_*$ with the (symplectic) orthogonal to the kernel of $p_*$.

By definition, $SL(2,\mathbb{R})$ acts by post-composition with translation charts. Thus, we have that $H$ is a $SL(2,\mathbb{R})$-invariant subbundle because of the $SL(2,\mathbb{R})$-invariance of $H_1^{(0)}(M_3,\mathbb{R})$.

Now, suppose that $p:(X,\omega)\to(M_3,\omega_{(3)})$ verifies the hypothesis of the theorem. By Proposition \ref{p.2}, we know that any affine diffeomorphism $f\in\textrm{Aff}(X,\omega)$ descends to (a unique) $g\in\Gamma_{EW}(4)$. Therefore, by Proposition \ref{p.1}, we have that any $f\in\textrm{Aff}(X,\omega)$ acts \emph{trivially} (i.e., by 
$\pm \textrm{id}$) on the $SL(2,\mathbb{R})$-invariant subbundle $H$. 

In particular, \emph{every} equivariant subbundle $E\subset H$ over $SL(2,\mathbb{R})\cdot(X,\omega)$ is $SL(2,\mathbb{R})$-invariant. Thus, the Hodge bundle over $SL(2,\mathbb{R})\cdot(X,\omega)$ contains \emph{plenty} $SL(2,\mathbb{R})$-invariant \emph{isotropic} subbundles: for instance, any $1$-dimensional equivariant subbundle $E\subset H$ has this property. 
\end{proof}

\begin{remark} More examples of $SL(2,\mathbb{R})$-invariant isotropic subbundles of the Hodge bundle can be produced by taking adequate finite covers $p:(X,\omega)\to (M_4,\omega_{(4)})$ of an origami 
$(M_4,\omega_{(4)})$ of genus $4$ introduced in \cite{FM}, \cite{FMZ} (and sometimes called \emph{Ornithorynque} in the literature). Indeed, the homological action of $\textrm{Aff}(M_4,\omega_{(4)})$ was also studied in \cite{MY} where it is shown that any $f\in\textrm{Aff}(M_4,\omega_{(4)})$ with derivative in the principal congruence subgroup $\Gamma(3)$ of level $3$ of $SL(2,\mathbb{Z})$ acts trivially on $H_1^{(0)}(M_4,\mathbb{R})$ (see Subsection 3.5 and Lemma 3.3 of Subsection 3.7 of \cite{MY}). In particular, one setup conditions (\emph{similar} to A), B) and C) above) on the ramification data of $p:(X,\omega)\to (M_4,\omega_{(4)})$ over $3$-torsion points of $(M_4,\omega_{(4)})\to\mathbb{R}^2/\mathbb{Z}^2$ so that any $1$-dimensional equivariant subbundle $E$ of the $6$-dimensional subbundle $H:=p^{-1}(H_1^{(0)}(M_4,\mathbb{R}))$ is $SL(2,\mathbb{R})$-invariant and isotropic. This will be briefly described in Subsection \ref{ss.Orni} below. 
\end{remark}

In the next subsection, we will use Theorem \ref{t.F-iso} to produce an \emph{explicit} origami (square-tiled surface) $(X,\omega)$ with non-trivial $SL(2,\mathbb{R})$-invariant isotropic subbundle in the Hodge bundle over its $SL(2,\mathbb{R})$-orbit.

\subsection{A ``concrete'' example of \boldmath{$SL(2,\mathbb{R})$}-invariant isotropic subbundles}\label{ss.2.2}

We construct in the following an example $X$ of an origami which 
satisfies the conditions A), B) and C) of Proposition~\ref{p.2}. 
We start from the origami $(M_3,\omega_{(3)})$ shown in Figure~\ref{ews}.
Observe that this is the same origami as shown on the 
left hand side of Figure~\ref{denotations}.
Its horizontal and vertical
gluings are given by the two permutations
\begin{equation}\label{wms-permutations}
  \wmsa = (1,6,3,8)(2,5,4,7) \mbox{ and } \wmsb = (1,2,3,4)(5,6,7,8).
\end{equation}
It has the four zeroes $\circ$, $\bullet$, $\blacksquare$ and $\square$.
We subdivide each square of 
$(M_3,\omega_{(3)})$ into $16$ subsquares. 
This gives an origami with $128$ squares (see Figure~\ref{bauplan}). \\

The origami $X$ will be a degree 4 cover of it ramified over the four points 
$Q_1$, $Q_2$, $Q_3$ and $Q_4$  and over the four zeroes
of $\omega_{(3)}$. Here $Q_i$ ($i \in \{1, \ldots, 4\}$) 
is  the end point of the segment $e_i$ shown in Figure~\ref{bauplan}  which is not a zero. 
The ramification over $Q_i$
will be given by the permutation $\pi_i$ with
\[\pi_1 = (1,3)(2,4), \pi_2 = (1,2), \pi_3 = (1,3)(2,4) \mbox{ and } \pi_4 = (1,3,2)\] 
The origami $X$ thus will consist of 
$512 = 4\cdot 8 \cdot 16$ squares.\\  
More precisely 
we construct $X$ as follows (see Figure~\ref{all}). We take four copies of the origami shown in Figure~\ref{bauplan}.
We glue each edge to the corresponding edge in the same copy.
Only the four edges $e_1$, $e_2$, $e_3$ and $e_4$
are glued to the corresponding edges $\overline{e_1}$, $\overline{e_2}$,
$\overline{e_3}$ and $\overline{e_4}$, respectively, of a possibly different copy. 
The gluings for those are given by
the permutations $\pi_1$, $\pi_2$, $\pi_3$, $\pi_4$, respectively. 
This means
e.g. that the edge $e_1$ in Copy $1$ is glued to $\overline{e_1}$
in Copy $3 = \pi_1(1)$, $e_1$ in Copy $2$ is glued to $\overline{e_1}$
in Copy $4 = \pi_1(2)$, $e_1$ in Copy $3$ is glued to $\overline{e_1}$
in Copy $1 = \pi_1(3)$ and $e_1$ in Copy $4$ is glued to $\overline{e_1}$
in Copy $2 =  \pi_1(4)$.\\

\begin{figure}[htb!]
    \begin{center}
      \setlength{\unitlength}{2.4cm}
      \begin{picture}(2,6.8)
        \put(1,0){\framebox(1,1){}}
        \put(1,1){\framebox(1,1){}}
        \put(1,2){\framebox(1,1){}}
        \put(1,3){\framebox(1,1){}}
        \put(0,3){\framebox(1,1){}}
        \put(0,4){\framebox(1,1){}}
        \put(0,5){\framebox(1,1){}}
        \put(0,6){\framebox(1,1){}}
        \put(1.94,-.06){\framebox(.12,.12){}}
        \put(1.94,1.94){\framebox(.12,.12){}}
        \put(1.94,3.94){\framebox(.12,.12){}}
        \put(-.06,3.94){\framebox(.12,.12){}}
        \put(-.06,5.94){\framebox(.12,.12){}}
        \put(2,1){\circle{.2}}
        \put(0,3){\circle{.2}}
        \put(0,5){\circle{.2}}
        \put(0,7){\circle{.2}}
        \put(2,3){\circle{.2}}
        \put(1,0){\circle*{.2}}
        \put(1,2){\circle*{.2}}
        \put(1,4){\circle*{.2}}
        \put(1,6){\circle*{.2}}    
        \put(0.93,0.93){\rule{3mm}{3mm}}
        \put(0.93,2.93){\rule{3mm}{3mm}}
        \put(0.93,4.93){\rule{3mm}{3mm}}
        \put(0.93,6.93){\rule{3mm}{3mm}}
        \put(0.25,3){\line(0,1){4} }
        \put(0.5,3){\line(0,1){4} }
        \put(0.75,3){\line(0,1){4} }
        \put(1.25,0){\line(0,1){4} }
        \put(1.5,0){\line(0,1){4} }
        \put(1.75,0){\line(0,1){4} }
        \put(0,6.25){\line(1,0){1} }
        \put(0,6.5){\line(1,0){1} }
        \put(0,6.75){\line(1,0){1} }
        \put(0,5.25){\line(1,0){1} }
        \put(0,5.5){\line(1,0){1} }
        \put(0,5.75){\line(1,0){1} }
        \put(0,4.25){\line(1,0){1} }
        \put(0,4.5){\line(1,0){1} }
        \put(0,4.75){\line(1,0){1} }
        \put(0,3.25){\line(1,0){2} }
        \put(0,3.5){\line(1,0){2} }    
        \put(0,3.75){\line(1,0){2} }
        \put(1,.25){\line(1,0){1} }
        \put(1,.5){\line(1,0){1} }
        \put(1,.75){\line(1,0){1} }
        \put(1,1.25){\line(1,0){1} }
        \put(1,1.5){\line(1,0){1} }
        \put(1,1.75){\line(1,0){1} }
        \put(1,2.25){\line(1,0){1} }
        \put(1,2.5){\line(1,0){1} }
        \put(1,2.75){\line(1,0){1} }
        \put(2.1,0.5){$a$}
        \put(-.2,4.5){$a$}
        \put(2.1,1.5){$b$}
        \put(-.2,3.5){$b$}
        \put(2.1,2.5){$c$}
        \put(-.2,6.5){$c$}
        \put(2.1,3.5){$d$}
        \put(-.2,5.5){$d$}
        \put(1.1,4.5){$e$}
        \put(.85,2.5){$e$}
        \put(1.1,5.5){$f$}
        \put(.85,1.5){$f$}
        \put(1.1,6.5){$g$}
        \put(.85,0.5){$g$}
        \linethickness{2mm}
        \put(1.99,0.05){\line(0,1){.2}}
        \put(2.1,.1){\large $e_1$}        
        \put(2,.25){\large \circle*{.08}}      
        \put(1.83,.31){$Q_1$}
        \put(-.3,4.1){\large $\overline{e_1}$}
        \put(0,4.25){\large \circle*{.08}}      
        \put(0.02,4.31){$Q_1$}
        %    \put(-.3,4.1){\Large $e_1$}
        \put(-0.01,4.05){\line(0,1){.2}}
        \put(1.99,1.05){\line(0,1){.2}}
        \put(2.1,1.1){\large $e_2$}
        \put(2,1.25){\large \circle*{.08}}      
        \put(1.83,1.31){$Q_2$}
        %    \put(-.3,3.1){\Large $e_2$}
        \put(-.3,3.1){\large $\overline{e_2}$}
        \put(0,3.25){\large \circle*{.08}}      
        \put(0.02,3.31){$Q_2$}
        \put(-0.01,3.05){\line(0,1){.2}}
        \put(.99,4.05){\line(0,1){.2}}
        %    \put(1.1,4.1){\Large $e_3$}
        \put(.7,2.1){\large $\overline{e_3}$}

        \put(1,2.25){\large \circle*{.08}}      
        \put(1,2.3){$Q_3$}

        \put(1.1,4.1){\large $e_3$}
        \put(1,4.25){\large \circle*{.08}}      
        \put(.81,4.31){$Q_3$}
        \put(.99,2.05){\line(0,1){.2}}
        \put(1.75,-.01){\line(1,0){.2}}
        %    \put(1.75,-.3){\Large $e_4$}
        \put(1.75,4){\circle*{.08}}
        \put(1.58,3.88){$Q_4$}
        \put(1.75,4.1){\large $e_4$}
        \put(1.75,-.25){\large $\overline{e_4}$}
        \put(1.75,0){\circle*{.08}}
        \put(1.58,.05){$Q_4$}
        \put(1.75,3.95){\line(1,0){.2}}
        %    \put(2.09,0.25){$Q_1$}
        %    \put(1.95,0.21){$\times$}
        %    \put(2.09,1.25){$Q_2$}
        %    \put(1.95,1.21){$\times$}
        %    \put(-.25,4.25){$Q_1$}
        %    \put(-.05,4.21){$\times$}
        %    \put(-.25,3.25){$Q_2$}
        %    \put(-.05,3.21){$\times$}
        \linethickness{0.2mm}
      \end{picture}\\[5mm]
      \caption{Building plan for the origami $X$ from Definition~\ref{bigone}; 
        the point $Q_i$ respectively is the end point of $e_i$ which is not a singularity}\label{bauplan}
    \end{center}
  \end{figure}
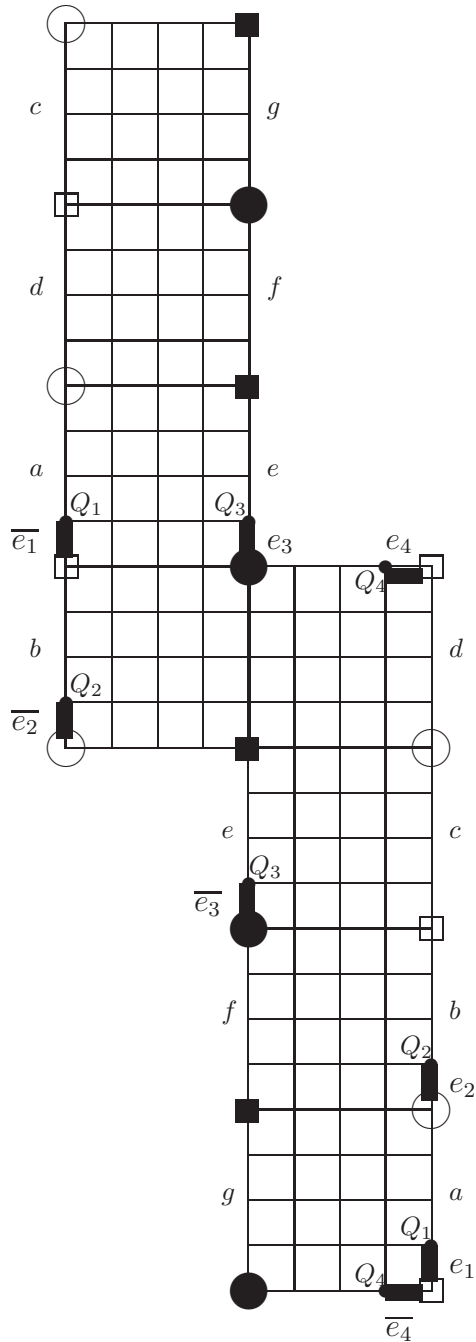

%\vspace*{2mm}

We will label the $512$
squares of $X$ by tuples 
$(a,i,j,k) \in \{1, \ldots, 8\} \times  \Z/4\Z \times  \Z/4\Z \times  \Z/4\Z$, 
where $a$ denotes the number
of the square in  $M_3$, $(i,j)$ denotes which of the $16$ subsquares it is,
and $k$ is the leaf of the cover (see Figure~\ref{denotations}).\\

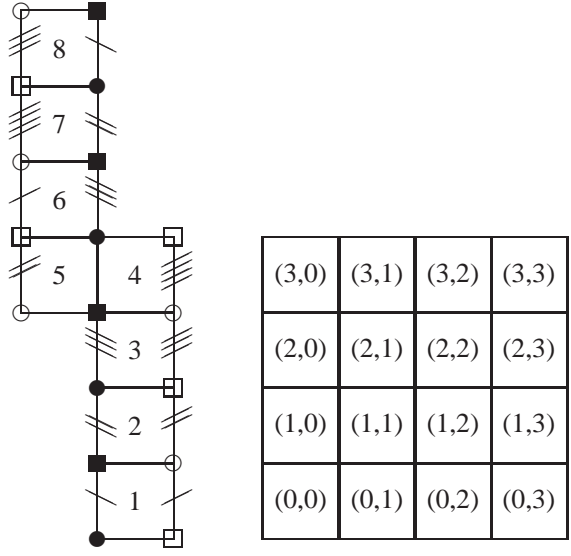
\begin{figure}[h]
  \begin{center}
    \setlength{\unitlength}{1cm}
    \begin{picture}(2,7)
      \put(1,0){\framebox(1,1){1}}
      \put(1,1){\framebox(1,1){2}}
      \put(1,2){\framebox(1,1){3}}
      \put(1,3){\framebox(1,1){4}}
      \put(0,3){\framebox(1,1){5}}
      \put(0,4){\framebox(1,1){6}}
      \put(0,5){\framebox(1,1){7}}
      \put(0,6){\framebox(1,1){8}}
    \put(1.85,.45){\line(2,1){.4}}
    \put(-.15,4.45){\line(2,1){.4}}
    \put(1.85,1.45){\line(2,1){.4}}
    \put(1.85,1.55){\line(2,1){.4}}
    \put(-.15,3.45){\line(2,1){.4}}
    \put(-.15,3.55){\line(2,1){.4}}
    \put(1.85,2.4){\line(2,1){.4}}
    \put(1.85,2.5){\line(2,1){.4}}
     \put(1.85,2.6){\line(2,1){.4}}  
    \put(-.15,6.4){\line(2,1){.4}}
    \put(-.15,6.5){\line(2,1){.4}}
    \put(-.15,6.6){\line(2,1){.4}}
    \put(1.85,3.3){\line(2,1){.4}}
    \put(1.85,3.4){\line(2,1){.4}}
    \put(1.85,3.5){\line(2,1){.4}}  
    \put(1.85,3.6){\line(2,1){.4}}
    \put(-.15,5.3){\line(2,1){.4}}  
    \put(-.15,5.4){\line(2,1){.4}}
    \put(-.15,5.5){\line(2,1){.4}}
    \put(-.15,5.6){\line(2,1){.4}}
    \put(.85,.65){\line(2,-1){.4}}
    \put(.85,6.65){\line(2,-1){.4}}
    \put(.85,1.65){\line(2,-1){.4}}
    \put(.85,1.55){\line(2,-1){.4}}
    \put(.85,5.65){\line(2,-1){.4}}
    \put(.85,5.55){\line(2,-1){.4}}
    \put(.85,2.6){\line(2,-1){.4}}
    \put(.85,2.7){\line(2,-1){.4}}
    \put(.85,2.8){\line(2,-1){.4}}
    \put(.85,4.6){\line(2,-1){.4}}
    \put(.85,4.7){\line(2,-1){.4}}
    \put(.85,4.8){\line(2,-1){.4}} 
    \put(2.4,2.85){\line(1,2){.15}}
    \put(2.45,2.85){\line(1,2){.15}}
    \put(2.5,2.85){\line(1,2){.15}}
    \put(.4,5.85){\line(1,2){.15}}
    \put(.45,5.85){\line(1,2){.15}}
    \put(.5,5.85){\line(1,2){.15}}
%%%
    \put(-.11,3.89){\framebox(.22,.22){}}
    \put(-.11,5.89){\framebox(.22,.22){}}
    \put(1.89,-.11){\framebox(.22,.22){}}
    \put(1.89,1.89){\framebox(.22,.22){}}
    \put(1.89,3.89){\framebox(.22,.22){}}
    \put(0,3){\circle{.2}}
    \put(0,5){\circle{.2}}
    \put(0,7){\circle{.2}}
    \put(2,1){\circle{.2}}
    \put(2,3){\circle{.2}}
    \put(1,0){\circle*{.2}}
    \put(1,2){\circle*{.2}}
    \put(1,4){\circle*{.2}}
    \put(1,6){\circle*{.2}}    
    \put(.89,0.89){\rule{2.2mm}{2.2mm}}
    \put(.89,2.89){\rule{2.2mm}{2.2mm}}
    \put(.89,4.89){\rule{2.2mm}{2.2mm}}
    \put(.89,6.89){\rule{2.2mm}{2.2mm}}
    \end{picture}
    \hspace*{10mm}
    \begin{picture}(4,4)
      \put(0,0){\framebox(1,1){(0,0)}}
      \put(1,0){\framebox(1,1){(0,1)}}
      \put(2,0){\framebox(1,1){(0,2)}}
      \put(3,0){\framebox(1,1){(0,3)}}
      \put(0,1){\framebox(1,1){(1,0)}}
      \put(1,1){\framebox(1,1){(1,1)}}
      \put(2,1){\framebox(1,1){(1,2)}}
      \put(3,1){\framebox(1,1){(1,3)}}
      \put(0,2){\framebox(1,1){(2,0)}}
      \put(1,2){\framebox(1,1){(2,1)}}
      \put(2,2){\framebox(1,1){(2,2)}}
      \put(3,2){\framebox(1,1){(2,3)}}
      \put(0,3){\framebox(1,1){(3,0)}}
      \put(1,3){\framebox(1,1){(3,1)}}
      \put(2,3){\framebox(1,1){(3,2)}}
      \put(3,3){\framebox(1,1){(3,3)}}
    \end{picture}
    \caption{
      Left side: Labels $a$ denoting the square in $M_3$.
      Right side: labels $(i,j)$ denoting the position within the ($4\times 4$)-squares.}
    \label{denotations}
\end{center}
\end{figure}
%\end{center}

\begin{definition}\label{bigone}
In the following we use the two permutations $\wmsa $ and $\wmsb$ in $S_8$
defined in (\ref{wms-permutations}).
Let  $X$ be the origami defined by the following two permutations 
(compare Figure~\ref{bauplan} and Figure~\ref{denotations}):  
\[
  \begin{array}{ll}
   \sigma_a:& (a,i,j,k) \mapsto 
   \begin{cases}
     (a,i,j+1,k) \mbox{ if } j \in \{0,1,2\},\\
     (\wmsa(a),i,j+1,k) 
     \begin{array}[t]{l}
       \mbox{ if } j = 3\\  
       \hspace*{-2,2cm} \mbox{ and } 
       (a,i,j) \not\in \{(1,0,3),(2,0,3),(6,0,3)\},
     \end{array}\\
     (\wmsa(a),i,j+1,\pi_1(k)) \mbox{ if }  (a,i,j) = (1,0,3)\\
     (\wmsa(a),i,j+1,\pi_2(k)) \mbox{ if }  (a,i,j) = (2,0,3)\\ 
      (\wmsa(a),i,j+1,\pi_3(k)) \mbox{ if }  (a,i,j) = (6,0,3)\\ 
%     (\wmsa(a),i,j+1,\pi_4(k)) \mbox{ if }  (a,i,j) = (4,0,3)
   \end{cases}\\[2cm]
   \sigma_b:& (a,i,j,k) \mapsto 
   \begin{cases}
     (a,i+1,j,k) \mbox{ if } i \in \{0,1,2\}\\
     (\wmsb(a),i+1,j,k) 
     \begin{array}[t]{l}
       \mbox{ if } i = 3 \\ 
       \hspace*{-2,2cm} \mbox{ and } (a,i,j) \not\in \{(4,3,3)\}
       \end{array}\\ 
      (\wmsb(a),i+1,j,\pi_4(k)) \mbox{ if }  (a,i,j) = (4,3,3)
   \end{cases}\\

  \end{array}
\]

\end{definition}

We now list some properties of the origami $X$.

\begin{lemma}
Let $X$ be the origami defined in Definition~\ref{bigone}. Recall
that $Q_i$ ($i \in \{1,2,3,4\}$) is the end point of $e_i$ which is not a zero (see Figure~\ref{bauplan}).
Then we have:
\begin{enumerate}
\item[i)] 
  $X$ has genus 15.  It has 17 zeroes and lies in 
  $\cH(5,3,3,3,2,\underbrace{1,\ldots, 1}_{12})$.
\item[ii)]
  The ramification data of the degree $4$ cover $p:X \to M_3$ are:
  \begin{itemize}
    \item[]  $(2,2)$ at the zero $\bullet$,\;  $(2,1,1)$ at the zero $\circ$,
    \item[]  $(1,1,1,1)$ at the zero $\blacksquare$, \; $(3,1)$ at the zero $\square$
    \item[]  $(2,2)$ at $Q_1$,\; $(2,1,1)$ at $Q_2$, 
    \item[]  $(2,2)$ at $Q_3$\, and  $(3,1)$ at $Q_4$.
  \end{itemize}
\item[iii)]
  The ramification data of the degree $32$ cover $q:X \to E$ are:
  \begin{itemize}
  \item[]
    $(6,4,4,4,2,2,2,2,2,2,2)$ \, at the  point $\infty$,
  \item[]
    $(2,2,2,2,2,\underbrace{1,\ldots, 1}_{22})$ \, at  the point $(0,\frac{1}{4})$ and
  \item[]
    $(3, \underbrace{1, \ldots, 1}_{29})$ \, at the point $(\frac{3}{4},0)$
  \end{itemize}
\end{enumerate}
In particular $X$ satisfies the conditions A), B) and C)\, from Proposition~\ref{p.2}.
\end{lemma}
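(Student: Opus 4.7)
The plan is to first determine the ramification of $p\colon X\to M_3$ (proving (ii)), then push this forward along $M_3\to E$ to obtain the ramification of $q$ (proving (iii)), and finally use Riemann--Hurwitz together with the correspondence ``order of zero of $\omega$ equals ramification index minus one'' to conclude (i).

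The ramification of $p$ at each $Q_i$ is immediate: since $Q_i$ is a regular point of $M_3$, the fibre type of $p$ over $Q_i$ is just the cycle type of $\pi_i$, yielding $(2,2)$, $(2,1,1)$, $(2,2)$, $(3,1)$ at $Q_1,\dots,Q_4$. The main work is to compute the ramification of $p$ above the four zeroes of $\omega_{(3)}$. For this I would inspect Figure~\ref{bauplan} to identify the endpoint other than $Q_i$ of each slit $e_i$, remembering that $e_i$ and $\overline{e_i}$ get glued by the boundary identifications of the fundamental domain of $M_3$ into a single slit. This reads off as: slit~$1$ connects $\square$ to $Q_1$, slit~$2$ connects $\circ$ to $Q_2$, slit~$3$ connects $\bullet$ to $Q_3$, and slit~$4$ connects $\square$ to $Q_4$. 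Taking for each zero $P$ a small disk containing $P$ together with those $Q_i$ at the ends of slits incident to $P$ (and no other branch points), the boundary of this disk lifts trivially to $X$; in $\pi_1$-terms this forces the monodromy $\rho(\gamma_P)$ to equal the product of the $\pi_i^{-1}$ for slits with one endpoint at $P$, taken in a suitable cyclic order. This yields the identity at $\blacksquare$ (no slit ends there) and the permutations $\pi_2^{-1}$, $\pi_3^{-1}$, $\pi_1^{-1}\pi_4^{-1}$ at $\circ,\bullet,\square$ respectively. A short direct computation shows that $\pi_1^{-1}\pi_4^{-1}=(1,3)(2,4)\cdot(1,2,3)=(1,4,2)$ has cycle type $(3,1)$, and together these cycle types match exactly the lists claimed in (ii).

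For (iii), I would use that $M_3\to E$ has degree $8$ and is ramified only over $\infty\in E$ with ramification data $(2,2,2,2)$, the preimages being the four zeroes of $\omega_{(3)}$. A direct coordinate check in the fundamental domain shows that $Q_1$, $Q_2$, $Q_3$ all map to $(0,\tfrac14)\in E$ whereas $Q_4$ maps to $(\tfrac34,0)$, so the ramification points of $q$ lie exactly over $\infty$, $(0,\tfrac14)$ and $(\tfrac34,0)$. Multiplying ramification indices along $X\xrightarrow{p}M_3\to E$ then gives: at $\infty$, the contributions $2\cdot(2,2)$, $2\cdot(2,1,1)$, $2\cdot(1,1,1,1)$, $2\cdot(3,1)$ from $\bullet,\circ,\blacksquare,\square$ combine into $(6,4,4,4,2^7)$; at $(0,\tfrac14)$, the contributions $(2,2)$, $(2,1,1)$, $(2,2)$ from $Q_1,Q_2,Q_3$ together with the $5$ remaining unramified preimages in $M_3$ combine into $(2^5,1^{22})$; at $(\tfrac34,0)$, only $Q_4$ contributes non-trivially, giving $(3,1^{29})$.

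Finally for (i): Riemann--Hurwitz applied to $q$ gives $2g_X-2=\sum(e-1)=21+5+2=28$, hence $g_X=15$, and translating each ramification index $e>1$ of $q$ into the order $e-1$ of a zero of $\omega$ yields exactly $17$ zeroes of orders $(5,3,3,3,2,1^{12})$. The verification of conditions A)--C) from Proposition~\ref{p.2} is then immediate: A) and B) hold because the three ramification points of $q$ are $4$-division points of $E$ with pairwise distinct ramification data, while C) holds because the four lists of $p$-ramification at the zeroes of $\omega_{(3)}$ computed in (ii) are pairwise distinct. The main place to be careful is the cyclic order at $\square$, where two slits meet; however, cyclic reorderings of a product of permutations preserve the cycle type, so the resulting $(3,1)$ at $\square$ does not depend on which order one takes.
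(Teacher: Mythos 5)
Your proposal is correct and follows essentially the same route as the paper's proof: reading off the monodromy $\pi_i$ around each $Q_i$ and the products of $\pi_i^{-1}$ around the zeroes incident to the slits for ii) (your slit-endpoint identifications and the computation $\pi_1^{-1}\pi_4^{-1}=(1,4,2)$ match the paper exactly), then using multiplicativity of ramification indices along $q=\pi\circ p$ for iii). The only difference is that you spell out the Riemann--Hurwitz count and the zero-order translation behind i), which the paper compresses into ``i) follows from iii)''.
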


\begin{proof}
{\bf ii)}
We directly obtain from the construction of $X$
that the monodromy of a small positively oriented loop around $Q_i$ on $M_3$ is $\pi_i$.
Thus the ramification data of $p$ at $Q_i$ is the list
of the lengths of cycles of $\pi_i$. A small loop around the zero $\bullet$
on $M_3$ crosses the slit $\overline{e_3}$, thus the monodromy
is $\pi_3^{-1} = (1,3)(2,4)$. Similarly one obtains for
the loops around  $\circ$, $\blacksquare$ and $\square$
the monodromies $\pi_2^{-1} = (1,2)$, the identity $id$ and ${\pi_1}^{-1}\pi_4^{-1} = (1,4,2)$,
respectively.
This gives the ramification data of $p$ at the zeroes. \\
{\bf iii)}
Recall that for the composition $q = (p :X \stackrel{4}{\to} M_3)\circ (\pi: M_3 \stackrel{8}{\to} E)$ we have:
for each  $x$ in $X$ the ramification index of $q$ at $x$ is equal to the ramification
index of $p$ at $x$ multiplied by the ramification index of $\pi$ at $p(x)$. 
%Observe from Figure~\ref{bauplan}
Recall that $\infty$ has the four zeroes $\bullet$, $\circ$, $\blacksquare$ and $\square$
as preimages on $M_3$ and that their ramification index with respect to $\pi$
is $2$. The point
$(0,1/4)$ has the three preimages $Q_1$, $Q_2$ and $Q_3$ and further five
preimages over which $p$ is unramified (see Figure~\ref{bauplan}).
Finally, $(3/4,0)$ has the preimage $Q_4$ and seven
preimages which are not ramification points of $p$. 
Now we obtain the data in iii) from the data in ii).\\
{\bf i)} follows from iii).
\end{proof}

\subsection{Another ``concrete'' example of \boldmath {$SL(2,\mathbb{R})$}-invariant isotropic subbundles}\label{ss.Orni}%

In this section we describe how one can similarly obtain non-trivial isotropic $SL(2,\mathbb{R})$-invariant subbundles
using coverings of the Ornithorynque origami. Recall that the Ornithorynque origami $\Orni$
is the origami shown in Figure~\ref{figure-orni}. It carries a natural covering to the torus
$E$ of degree 3 indicated in the figure (where we take $E$ as the torus obtained from a
$2 \times 2$ square) 
which we denote by 
\[\pi_2: \Orni \to E.\]
$\Orni$ has the three singularities
$\textbf{X}_1 = \circledcirc$,  $\textbf{Y}_1 = \bullet$  and $\textbf{Z}_1 = \circ$, see Figure~\ref{figure-orni},
and is of genus 4.
Its Veech group again is the full group $\SLZ$, see \cite[p. 473]{MY}.  
Similarly as before we use as a main tool that we explicitly know from \cite[Section 3]{MY}  
the elements of the affine group of $\Orni$ that act trivially on the subbundle $H:=H_1^{(0)}(M_4,\mathbb{R})$. 
More precisely, we have that
$f$ acts trivially on $H$ if and only if   
its derivative $D(f)$ lies in the principal congruence group $\Gamma(3)$ 
and $f$ fixes all preimages of the 2-division 
points on $E$ under $\pi_2$.
Here we choose the point $A$ on the torus $E$ shown in Figure~\ref{rampoints} as zero point
and call the other 2-division points $\textbf{X}$, $\textbf{Y}$ and $\textbf{Z}$, see again Figure~\ref{rampoints}. 
In particular, we have $\pi_2(\textbf{X}_1) = \textbf{X}$, $\pi_2(\textbf{Y}_1) = \textbf{Y}$ and $\pi_2(\textbf{Z}_1) = \textbf{Z}$.
Thus \cite{MY}  gives us:
\begin{equation}\label{orni-trivial-homology}
\begin{array}{l}
f \mbox{ acts trivially on } H:=H_1^{(0)}(M_4,\mathbb{R}) \Leftrightarrow\\ 
\hspace*{3mm} D(f) \in \Gamma(3) \mbox{ and } f(P) = P  \mbox{ for all } P \in  
\{A_1, A_2, A_3, \textbf{X}_1, \textbf{Y}_1, \textbf{Z}_1\}
\end{array}
\end{equation}
Here $A_1$, $A_2$ and $A_3$ are the three preimages of $A$ (see Figure~\ref{figure-orni}).
% the set of preimages of the  2-division points under $\pi_2$ is equal to 
%$\{A_1, A_2, A_3, X_1, Y_1, Z_1\}$ . 
The fact that the three unramified points $A_1$, $A_2$ and $A_3$
have to be fixed pointwise will give us some extra trouble in our construction which enforces
an additional step: we will use a covering $h$
of $\Orni$ which is ramified over them and takes care of this problem.

\begin{figure}[htb!]
  \begin{center}
    \setlength{\unitlength}{1.3cm}
    \begin{picture}(8,2.5)
      \put(0,0){\framebox(1,1){8}}
      \put(1,0){\framebox(1,1){9}}
      \put(3,0){\framebox(1,1){10}}
      \put(4,0){\framebox(1,1){11}}
      \put(6,0){\framebox(1,1){12}}
      \put(7,0){\framebox(1,1){7}}
      \put(0,1){\framebox(1,1){2}}
      \put(1,1){\framebox(1,1){1}}
      \put(3,1){\framebox(1,1){6}}
      \put(4,1){\framebox(1,1){5}}
      \put(6,1){\framebox(1,1){4}}
      \put(7,1){\framebox(1,1){3}}
      \put(0.4,-.3){\scs{6}}
      \put(1.4,-.3){\scs{3}}
      \put(3.4,-.3){\scs{4}}
      \put(4.4,-.3){\scs{1}}
      \put(6.4,-.3){\scs{2}}
      \put(7.4,-.3){\scs{5}}
      \put(0.35,2.05){\scs{12}}
      \put(1.35,2.05){\scs{11}}
      \put(3.4,2.05){\scs{8}}
      \put(4.4,2.05){\scs{7}}
      \put(6.35,2.05){\scs{10}}
      \put(7.4,2.05){\scs{9}}
      \put(-.2,.4){\scs{7}}
      \put(2.8,.4){\scs{9}}
      \put(5.7,.4){\scs{11}}
      \put(2.05,.4){\scs{10}}
      \put(5.05,.4){\scs{12}}
      \put(8.05,.4){\scs{8}}
      \put(-.2,1.4){\scs{1}}
      \put(2.8,1.4){\scs{5}}
      \put(5.8,1.4){\scs{3}}
      \put(2.05,1.4){\scs{2}}
      \put(5.05,1.4){\scs{6}}
      \put(8.05,1.4){\scs{4}}
      \put(1.2,1.7){\vector(-1,0){.5}}
      \put(1.2,1.7){\scs{+1}}
      \put(4.2,1.7){\vector(-1,0){.5}}
      \put(4.2,1.7){\scs{+1}}
      \put(7.2,1.7){\vector(-1,0){.5}}
      \put(7.2,1.7){\scs{+1}}
      \linethickness{1mm}
      \put(1,1){\line(0,1){1}}
      \put(4,1){\line(0,1){1}}
      \put(7,1){\line(0,1){1}}
      \put(1,1){\circle*{.2}}
      \put(4,1){\circle*{.2}}
      \put(7,1){\circle*{.2}}
      \put(0,1){\circle{.2}}
      \put(3,1){\circle{.2}}
      \put(6,1){\circle{.2}}
      \put(2,1){\circle{.2}}
      \put(5,1){\circle{.2}}
      \put(8,1){\circle{.2}}
      \put(1,2){\circle{.15}}
      \put(1,2){\circle{.26}}
      \put(4,2){\circle{.15}}
      \put(4,2){\circle{.26}}
      \put(7,2){\circle{.15}}
      \put(7,2){\circle{.26}}
      \put(1,0){\circle{.15}}
      \put(1,0){\circle{.26}}
      \put(4,0){\circle{.15}}
      \put(4,0){\circle{.26}}
      \put(7,0){\circle{.15}}
      \put(7,0){\circle{.26}} 
      \put(-.3,-.3){$A_1$}
      \put(2.7,-.3){$A_2$}
      \put(5.7,-.3){$A_3$}

    \end{picture}
    \vspace*{5mm}
    \caption{The Ornithorynque origami $\Orni$: The label of an edge indicates
      to which square it is glued. Crossing one of the three bars from right to left
      leads one copy of $E$ higher (modulo 3), 
      i.e. from Square 1 to Square 6, from Square 5 to Square 4
      and from Square 3 to Square 2. $\Ornicov$ has the three singularities
      $\circ$, $\bullet$ and $\circledcirc$, each of order 2.}
    \label{figure-orni}
  \end{center}
\end{figure}
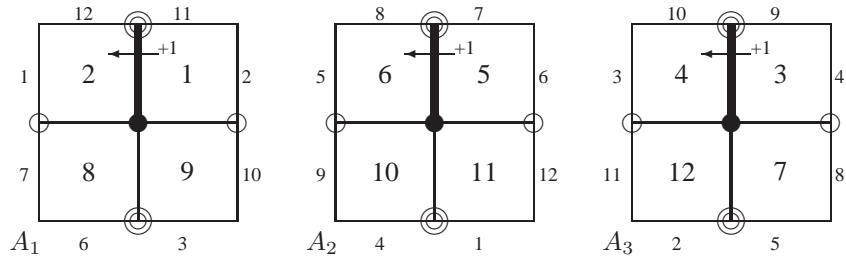

\begin{proposition}\label{jwe}
  Consider a translation covering $q_1:\Ornicov \to M_4$ and the following properties:
  \begin{enumerate}
  \item[A)]
    The ramification data of $q_1$ over the point $A_3$ is different from the ones over 
   $A_1$ and $A_2$.
  \item[B)]
   The ramification data of $q = \pi_2 \circ q_1$ over $A$ differ from the ramification data of $q$ over
    all other points.
  \item[C)]
    The Veech group $\Gamma(\Ornicov)$ of $\Ornicov$ is contained in $\Gamma(6)$.
  \end{enumerate}
  We then have:
  \begin{enumerate}
  \item[i)]
    Any affine homeomorphism $f$ of $\Ornicov$ descends to an affine homeomorphism $\bar{f}$
    of $\Orni$ which fixes the three singularities $\textbf{X}_1$, $\textbf{Y}_1$ and $\textbf{Z}_1$ and the three points $A_1$, $A_2$ and $A_3$ 
    pointwise.
  \item[ii)]
    The affine homeomorphisms of $\Ornicov$ act trivially on the lift of $H:=H_1^{(0)}(M_4,\mathbb{R})$ to $H_1(\Ornicov,\mathbb{R})$. 
    The Hodge bundle over the SL$(2, \R)$-orbit of $(\Ornicov,\omega)$
    thus contains non-trivial SL$(2, \R)$-invariant isotropic subbundles.
  \end{enumerate}
\end{proposition}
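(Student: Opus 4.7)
The plan is to parallel the proof of Proposition \ref{p.2}, replacing the Eierlegende Wollmilchsau by the Ornithorynque and invoking the criterion \pref{orni-trivial-homology} from \cite{MY} in place of Proposition \ref{p.1}. Given $f\in\textrm{Aff}(\Ornicov)$, I would first descend $f$ via $q=\pi_2\circ q_1$ to an affine diffeomorphism $h$ of $E$ (by the standard descent principle, \cite[Prop. 2.6]{Schmithusen}), and then produce a suitable lift $\bar f$ of $h$ via $\pi_2$ that coincides with the descent of $f$ via $q_1$, so that \pref{orni-trivial-homology} can be applied to $\bar f$.

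Hypothesis C) gives $D(h)=D(f)\in\Gamma(6)\subset\Gamma(2)\cap\Gamma(3)$; in particular $D(h)\in\Gamma(2)$ forces $h$ to fix each of the four $2$-torsion points $A,\textbf{X},\textbf{Y},\textbf{Z}$ pointwise, while condition B) independently guarantees $h(A)=A$ via the distinctive ramification data of $q$ over $A$. Since $h$ fixes the branch set $\{\textbf{X},\textbf{Y},\textbf{Z}\}$ of $\pi_2$, an analog of \cite[Prop. 1.2]{HerrlichSchmithuesen} would produce the three lifts of $h$ to $\Orni$, and an analog of \cite[Lemma 3.13]{MatheusSchmithuesen} would single out the one $\bar f$ that descends $f$ via $q_1$.

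To complete i), it remains to check that $\bar f$ pointwise fixes $\{A_1,A_2,A_3,\textbf{X}_1,\textbf{Y}_1,\textbf{Z}_1\}$. The fixation of $\textbf{X}_1,\textbf{Y}_1,\textbf{Z}_1$ is automatic as each is the unique preimage under $\pi_2$ of a fixed point of $h$. The set $\{A_1,A_2,A_3\}$ is a $\mathbb{Z}/3\mathbb{Z}$-orbit for the deck group of $\pi_2$, so the three lifts of $h$ restrict to an entire coset of this cyclic subgroup inside $S_3$. Condition A), combined with the $\bar f$-invariance of $q_1$-ramification data, forces $\bar f(A_3)=A_3$; together with condition B) (which, in combination with A), precludes the transpositional coset) this pins down $\bar f$ as the identity on $\{A_1,A_2,A_3\}$.

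Part ii) would then be essentially immediate: $D(\bar f)\in\Gamma(3)$ and $\bar f$ pointwise fixes all six specified points, so \pref{orni-trivial-homology} implies $\bar f$ acts trivially on $H=H_1^{(0)}(M_4,\mathbb R)$; hence $f$ acts trivially on the isomorphic preimage $(q_1)^{-1}(H)\subset H_1(\Ornicov,\mathbb R)$, and, exactly as in the proof of Theorem \ref{t.F-iso}, every $1$-dimensional equivariant subbundle of $(q_1)^{-1}(H)$ is a non-trivial $SL(2,\mathbb{R})$-invariant isotropic subbundle of the Hodge bundle. I expect the hardest step to be pinning down $\bar f$ on the fiber over $A$: condition A) identifies the unique lift of $h$ fixing $A_3$, but whether that lift is the identity or a transposition on $\{A_1,A_2\}$ depends on which coset of the deck group arises, and showing that conditions A) and B) together with the $\mathbb{Z}/3\mathbb{Z}$-symmetry rule out the transpositional possibility is the main technical point.
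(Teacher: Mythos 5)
Your overall route is the paper's: descend $f$ via $q$ to the torus, use B) to fix $A$ and C) to fix the two-division points, transfer $f$ to an affine homeomorphism $\bar f$ of $\Orni$ via the normality of the coverings, and finally apply \pref{orni-trivial-homology}; your part ii) is exactly the paper's. One structural difference worth noting: the paper does not invoke an ``analog of \cite[Prop.~1.2]{HerrlichSchmithuesen}'' to lift $g$ to $\Orni$. Instead it uses that the Veech group of $\Orni$ is all of $\SLZ$ (\cite[p.~473]{MY}) to produce some affine $\bar{f}'$ with $D(\bar{f}')=D(g)$, and then checks that the descent of $\bar{f}'$ to $E$ must coincide with $g$ because $A$ is the only two-division point that is not a branch point of $\pi_2$; only then does \cite[Lemma 3.13]{MatheusSchmithuesen} apply. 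Your version leaves the existence of the lift as an unproved analogy, but this is repairable along the paper's lines.

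The genuine gap is at the step you yourself flag as the main technical point: ruling out that $\bar f$ acts as the transposition $(A_1\,A_2)$ on the fiber over $A$. Your coset analysis is correct --- the lifts of $g$ induce a coset of $\Z/3\Z$ inside $\textrm{Sym}\{A_1,A_2,A_3\}$, and A) forces $\bar f(A_3)=A_3$ --- but your claim that conditions A) and B) together preclude the transpositional coset cannot work. Both A) and B) are hypotheses about ramification data, and the ramification data of $q_1$ over $A_1$ and over $A_2$ are in general equal (in the paper's construction both are $(2n)$, cf.\ Remark~\ref{ramdata}); no ramification-data condition can therefore distinguish $A_1$ from $A_2$ or exclude an affine homeomorphism swapping them while fixing $A_3$. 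The paper closes this step with an external structural input: by \cite[p.~473]{MY}, any affine homeomorphism of $\Orni$ fixing $A_3$ automatically fixes $A_1$ and $A_2$ as well --- equivalently, $\textrm{Aff}(\Orni)$ acts on $\{A_1,A_2,A_3\}$ through the cyclic group $\Z/3\Z$, so the transpositional coset simply never arises for affine lifts. Without citing or reproving this fact about $\Orni$ itself, your argument does not go through. (A minor ordering point, not a gap: C) gives $D(g)\in\Gamma(2)$, but to conclude that $g$ fixes $\textbf{X}$, $\textbf{Y}$ and $\textbf{Z}$ you first need $g(A)=A$ from B), since an affine map of $E$ may have a translation part; you have both facts available.)
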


\begin{proof}
ii) directly follows from (\ref{orni-trivial-homology})  and 
i) in the same way as in Theorem~\ref{t.F-iso}.
To prove i) we first show that $f$ descends to $\Orni$.
This again follows from \cite[Lemma 3.13]{MatheusSchmithuesen} in the following way.
By the universality of the torus,
we know that $f$ descends via $q$ to some $g$ on $E$ which by B) fixes the
point $A$. Furthermore,
the Veech group of $\Orni$ is the full group $SL(2,\Z)$, see \cite[p. 473]{MY}. Thus there is some
affine homeomorphism $\bar{f}'$ of $\Orni$ with derivative $D(\bar{f}') = D(f)=D(g)$. 
It descends via $\pi_2$ to a homeomorphism of $E$ with the same derivative. The descend 
preserves two-division points and thus has to fix the point $A$, since this is the only
non-ramification point with respect to $\pi_2$ among the  two-division points. 
Thus the descend actually is equal to $g$ and 
$\bar{f}'$ is a lift of $g$. Now  \cite[Lemma 3.13]{MatheusSchmithuesen} tells us that $f$ descends via $q_1$ 
to some
affine homeomorphism $\bar{f}$ of $\Orni$.\\
We now show that $\bar{f}$ fixes the desired points: 
Since $g$ fixes $A$, we have that $\bar{f}$ preserves $\pi_2^{-1}(A) = \{A_1,A_2,A_3\}$. It follows then
from A) that $\bar{f}$ fixes the point $A_3$. Any
affine homeomorphism of $\Orni$ that fixes $A_3$ also fixes $A_1$ and $A_2$, see 
\cite[p.473]{MY}.
Furthermore, since the Veech group is contained in $\Gamma(2)$ by C), we obtain that  
$g$ fixes $\textbf{X}$, $\textbf{Y}$ and $\textbf{Z}$ pointwise. Thus we have that $\bar{f}$ fixes the singularities $\textbf{X}_1$, $\textbf{Y}_1$ and $\textbf{Z}_1$
pointwise.
\end{proof}

In the following we give an explicit construction of origamis $\Ornicov$ that
have the properties required  in Proposition~\ref{jwe}. We use for this the construction of what
we call {\em fake fibre product}, compare \cite[Section 1.2]{Nisbach},
which we define in the following.

\begin{definition}
Let $\pi_1:X_1 \to Y$ and $\pi_2: X_2 \to Y$ be two ramified coverings. Let $S$ be the union
of the set of ramification points of $\pi_1$ and that of $\pi_2$. Thus we obtain unramified
coverings $\pi_1: X_1^* \to Y^*$ and $\pi_2:X_2^* \to Y^*$, with $Y^* = Y\backslash S$, $X_1^* = X_1 \backslash \pi_1^{-1}(S)$
and   $X_2^* = X_2 \backslash \pi_2^{-1}(S)$. Let now $\rho_1$ and $\rho_2$ be the monodromy groups of these
unramified coverings, respectively. Hence they are maps from the fundamental group $\pi_1(Y^*)$ of $Y^*$
to the symmetric groups $S_{d_1}$, respectively $S_{d_2}$, where $d_1$ is the degree of $\pi_1$
and $d_2$ is the degree of $\pi_2$. Let $d = d_1\cdot d_2$ and identify $S_d$ with the symmetric group 
of the set $\{1,\ldots, d_1\} \times \{1, \ldots d_2\}$.  We then can consider the map 
\[\rho: \pi_1(Y^*) \to S_{d}, w \mapsto \rho(w) \mbox{ with }\rho(w)(a,b) = (\rho_1(w)(a),\rho_2(w)(b)).\]
Note that $\rho$ does not have to be a transitive action. If it is transitive, it defines a finite (connected) unramified
covering $q: Z^* \to Y^*$ for some punctured finite Riemann surface $Z^*$.
We then may extend $q$ to a ramified covering $q: Z \to Y$ between closed Riemann surfaces. We call $q$ the 
{\em fake fibre product}\footnote{ Observe that this is not the fibre product of $X_1$ and $X_2$
in the category of algebraic curves since that may have singularities.} of $\pi_1$ and $\pi_2$. 
If $\pi_1$ and $\pi_2$ are translation coverings, the translation
atlas on $Y^*$ can be lifted to $Z^*$ and $q$ becomes a translation covering.
\end{definition}

\begin{remark}Observe that the fake fibre product naturally comes with coverings $q_1: Z \to X_2$ and $q_2: Z \to X_1$
such that we have a commutative diagram
\[
\xymatrix{
  Z \ar^{q_2}[rr] \ar^{q_1}[d]&& X_1 \ar^{\pi_1}[d]\\
  X_2 \ar^{\pi_2}[rr] & & E
}
\]
\end{remark}

For the construction of the family of origamis $\Ornicov = \Ornicov(n)$, we now take as main ingredients which
will be defined next:
\begin{itemize}
\item 
  A covering $h:\Ornitwo \to \Orni$ which ramifies differently over $A_3$ than over $A_1$ and $A_2$
  and is unramified over all other points. We then work with $\tilde{\pi}_2 = \pi_2 \circ h: \Ornitwo \to E$. 
\item
  Coverings $\pi_1:Y = Y(n) \to E$ which ramify pairwise differently over $A$ and the two six-division
  points $P$ and $Q$ shown in Figure~\ref{rampoints}.
\item
  The fake fibre product $q:\Ornicov \to E$ of $\tilde{\pi}_2$ and $\pi_1$. We show that 
  we indeed obtain a connected surface. We denote the projection
  $\Ornicov \to \Ornitwo$ by $\tilde{q}_1$. 
\end{itemize}
The map $q_1 = h \circ \tilde{q}_1$ will then be the
map which we need for Proposition~\ref{jwe}, see also the diagram in Theorem~\ref{thm-orni}.\\

Let $\Ornitwo$ be the origami shown in Figure~\ref{Ornitwo} and $h:\Ornitwo \to \Orni$
the degree 2 covering to $\Orni$ obtained from mapping square $i$ on
$\Ornitwo$ to square $i \mbox{ mod } 12$  on $\Orni$.
You can directly read off the ramification data of $h$ from Figure~\ref{Ornitwo}. \\

  \begin{figure}[h]
    \setlength{\unitlength}{1.3cm}
    \begin{center}
      \begin{picture}(8,6)
        \put(0,0){\framebox(1,1){8}}
        \put(1,0){\framebox(1,1){9}}
        \put(3,0){\framebox(1,1){10}}
        \put(4,0){\framebox(1,1){11}}
        \put(6,0){\framebox(1,1){12}}
        \put(7,0){\framebox(1,1){7}}
        \put(0,1){\framebox(1,1){2}}
        \put(1,1){\framebox(1,1){1}}
        \put(3,1){\framebox(1,1){6}}
        \put(4,1){\framebox(1,1){5}}
        \put(6,1){\framebox(1,1){4}}
        \put(7,1){\framebox(1,1){3}}
        \put(0.4,-.3){\scs{6}}
        \put(1.4,-.3){\scs{3}}
        \put(3.4,-.3){\scs{4}}
        \put(4.4,-.3){\scs{1}}
        \put(6.4,-.3){\scs{2}}
        \put(7.4,-.3){\scs{5}}
        \put(0.35,2.05){\scs{12}}
        \put(1.35,2.05){\scs{11}}
        \put(3.4,2.05){\scs{8}}
        \put(4.4,2.05){\scs{7}}
        \put(6.35,2.05){\scs{10}}
        \put(7.4,2.05){\scs{9}}
        \put(-.2,.4){\scs{19}}
        \put(2.8,.4){\scs{21}}
        \put(5.7,.4){\scs{11}}
        \put(2.05,.4){\scs{22}}
        \put(5.05,.4){\scs{12}}
        \put(8.05,.4){\scs{20}}
        \put(-.2,1.4){\scs{1}}
        \put(2.8,1.4){\scs{5}}
        \put(5.8,1.4){\scs{3}}
        \put(2.05,1.4){\scs{2}}
        \put(5.05,1.4){\scs{6}}
        \put(8.05,1.4){\scs{4}}
        \put(1.2,1.7){\vector(-1,0){.5}}
        \put(1.2,1.7){\scs{+1}}
        \put(4.2,1.7){\vector(-1,0){.5}}
        \put(4.2,1.7){\scs{+1}}
        \put(7.2,1.7){\vector(-1,0){.5}}
        \put(7.2,1.7){\scs{+1}}

        \put(0,3){\framebox(1,1){20}}
        \put(1,3){\framebox(1,1){21}}
        \put(3,3){\framebox(1,1){22}}
        \put(4,3){\framebox(1,1){23}}
        \put(6,3){\framebox(1,1){24}}
        \put(7,3){\framebox(1,1){19}}
        \put(0,4){\framebox(1,1){14}}
        \put(1,4){\framebox(1,1){13}}
        \put(3,4){\framebox(1,1){18}}
        \put(4,4){\framebox(1,1){17}}
        \put(6,4){\framebox(1,1){16}}
        \put(7,4){\framebox(1,1){15}}
        \put(0.4,2.7){\scs{18}}
        \put(1.4,2.7){\scs{15}}
        \put(3.4,2.7){\scs{16}}
        \put(4.4,2.7){\scs{13}}
        \put(6.4,2.7){\scs{14}}
        \put(7.4,2.7){\scs{17}}
        \put(0.35,5.05){\scs{24}}
        \put(1.35,5.05){\scs{23}}
        \put(3.4,5.05){\scs{20}}
        \put(4.4,5.05){\scs{19}}
        \put(6.35,5.05){\scs{22}}
        \put(7.4,5.05){\scs{21}}
        \put(-.2,3.4){\scs{7}}
        \put(2.8,3.4){\scs{9}}
        \put(5.7,3.4){\scs{23}}
        \put(2.05,3.4){\scs{10}}
        \put(5.05,3.4){\scs{24}}
        \put(8.05,3.4){\scs{8}}
        \put(-.2,4.4){\scs{13}}
        \put(2.8,4.4){\scs{17}}
        \put(5.8,4.4){\scs{15}}
        \put(2.05,4.4){\scs{14}}
        \put(5.05,4.4){\scs{18}}
        \put(8.05,4.4){\scs{16}}
        \put(1.2,4.7){\vector(-1,0){.5}}
        \put(1.2,4.7){\scs{+1}}
        \put(4.2,4.7){\vector(-1,0){.5}}
        \put(4.2,4.7){\scs{+1}}
        \put(7.2,4.7){\vector(-1,0){.5}}
        \put(7.2,4.7){\scs{+1}}

        \linethickness{1mm}
        \put(1,1){\line(0,1){1}}
        \put(4,1){\line(0,1){1}}
        \put(7,1){\line(0,1){1}}
        \put(1,1){\circle*{.2}}
        \put(4,1){\circle*{.2}}
        \put(7,1){\circle*{.2}}
        \put(0,1){\circle{.2}}
        \put(3,1){\circle{.2}}
        \put(6,1){\circle{.2}}
        \put(2,1){\circle{.2}}
        \put(5,1){\circle{.2}}
        \put(8,1){\circle{.2}}
        \put(1,2){\circle{.15}}
        \put(1,2){\circle{.26}}
        \put(4,2){\circle{.15}}
        \put(4,2){\circle{.26}}
        \put(7,2){\circle{.15}}
        \put(7,2){\circle{.26}}
        \put(1,0){\circle{.15}}
        \put(1,0){\circle{.26}}
        \put(4,0){\circle{.15}}
        \put(4,0){\circle{.26}}
        \put(7,0){\circle{.15}}
        \put(7,0){\circle{.26}}
        
        \linethickness{1mm}
        \put(1,4){\line(0,1){1}}
        \put(4,4){\line(0,1){1}}
        \put(7,4){\line(0,1){1}}
        \put(1,4){\circle*{.2}}
        \put(4,4){\circle*{.2}}
        \put(7,4){\circle*{.2}}
        \put(0,4){\circle{.2}}
        \put(3,4){\circle{.2}}
        \put(6,4){\circle{.2}}
        \put(2,4){\circle{.2}}
        \put(5,4){\circle{.2}}
        \put(8,4){\circle{.2}}
        \put(1,5){\circle{.15}}
        \put(1,5){\circle{.26}}
        \put(4,5){\circle{.15}}
        \put(4,5){\circle{.26}}
        \put(7,5){\circle{.15}}
        \put(7,5){\circle{.26}}
        \put(1,3){\circle{.15}}
        \put(1,3){\circle{.26}}
        \put(4,3){\circle{.15}}
        \put(4,3){\circle{.26}}
        \put(7,3){\circle{.15}}
        \put(7,3){\circle{.26}}
      \end{picture}
    \end{center}
    \caption{Degree 2 covering $h: \Ornitwo \to \Orni$ of the Ornithorynque origami $\Orni$}\label{Ornitwo}
  \end{figure}

\begin{remark}\label{ramdata-pi2tilde}
The degree 2 covering $h:\Ornitwo \to \Orni$ is ramified over $A_1$ and $A_2$. 
All other points are unramified. Thus the map $\tilde{\pi}_2 = \pi_2 \circ h$
has the ramification data:
\begin{itemize}
\item
  over $A$: $(2,2,1,1)$
\item
  over $\textbf{X}$, $\textbf{Y}$ and $\textbf{Z}$: $(3,3)$
\end{itemize}
\end{remark}

For $n\in \N$ define the covering $\pi_1 =  \pi_1(n): Y = Y_n \to E$ as
follows: Take $n$ copies of the square shown in Figure~\ref{rampoints}.
Glue each edge to the opposite edge in the same copy, except for the 
edges $a$, $b$, $c$, $d$ and $e$.
Glue the edges labelled by $a$ according to  the permutation 
$\sigma_a = (1, \ldots, n)$, i.e. the upper edge of Square $31$ in Copy $i$ 
is glued to the lower edge of Square 1 in Copy $\sigma_a(i)$.
Similarly glue the edges labelled by $b$, $c$, $d$ and $e$   according to the permutation 
$\sigma_b = \sigma_c = \sigma_d = \sigma_e = (1,2)$. 
In the case of $e$ you glue the right edge of Square $2$ in Copy $i$ with
the left edge of Square 3 in Copy $\sigma_e(i)$. You can directly check from 
the construction that $Y_n$ is connected.

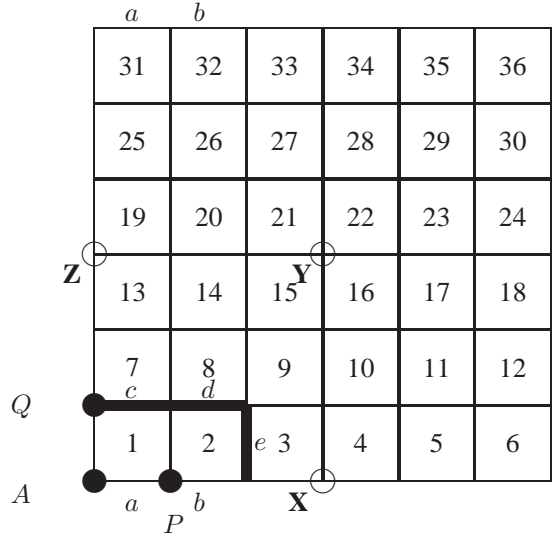
\begin{figure}[h]
  \setlength{\unitlength}{1cm}
  \begin{center}
    \begin{picture}(6,7)
      \put(0,0){\framebox(1,1){1}} 
      \put(0,1){\framebox(1,1){7}} 
      \put(0,2){\framebox(1,1){13}} 
      \put(0,3){\framebox(1,1){19}} 
      \put(0,4){\framebox(1,1){25}} 
      \put(0,5){\framebox(1,1){31}} 
      \put(1,0){\framebox(1,1){2}} 
      \put(1,1){\framebox(1,1){8}} 
      \put(1,2){\framebox(1,1){14}} 
      \put(1,3){\framebox(1,1){20}} 
      \put(1,4){\framebox(1,1){26}} 
      \put(1,5){\framebox(1,1){32}} 
      \put(2,0){\framebox(1,1){3}} 
      \put(2,1){\framebox(1,1){9}} 
      \put(2,2){\framebox(1,1){15}} 
      \put(2,3){\framebox(1,1){21}} 
      \put(2,4){\framebox(1,1){27}} 
      \put(2,5){\framebox(1,1){33}} 
      \put(3,0){\framebox(1,1){4}} 
      \put(3,1){\framebox(1,1){10}} 
      \put(3,2){\framebox(1,1){16}} 
      \put(3,3){\framebox(1,1){22}} 
      \put(3,4){\framebox(1,1){28}} 
      \put(3,5){\framebox(1,1){34}} 
      \put(4,0){\framebox(1,1){5}} 
      \put(4,1){\framebox(1,1){11}} 
      \put(4,2){\framebox(1,1){17}} 
      \put(4,3){\framebox(1,1){23}} 
      \put(4,4){\framebox(1,1){29}} 
      \put(4,5){\framebox(1,1){35}} 
      \put(5,0){\framebox(1,1){6}} 
      \put(5,1){\framebox(1,1){12}} 
      \put(5,2){\framebox(1,1){18}} 
      \put(5,3){\framebox(1,1){24}} 
      \put(5,4){\framebox(1,1){30}} 
      \put(5,5){\framebox(1,1){36}} 
      \put(0,0){\circle*{.3}}      
      \put(-1.1,-.3){$A$}
      \put(3,0){\circle{.3}}
      \put(2.55,-.4){$\textbf{X}$}
      \put(3,3){\circle{.3}}
      \put(2.6,2.63){$\textbf{Y}$}
      \put(0,3){\circle{.3}}
      \put(-.42,2.63){$\textbf{Z}$}
      \put(1,0){\circle*{.3}}
      \put(.9,-.7){$P$}
      \put(0,1){\circle*{.3}}
      \put(-1.1,.9){$Q$}
      \put(.4,-.4){$a$}
      \put(1.3,-.4){$b$}
      \put(.4,6.1){$a$}
      \put(1.3,6.1){$b$}
      \put(.4,1.1){$c$}
      \put(1.4,1.1){$d$}
      \put(2.1,.4){$e$}
     \linethickness{1.3mm}
     \put(0,1){\line(1,0){2}}
     \put(2,0){\line(0,1){1}} 
    \end{picture}\\[5mm]
  \end{center}
  \caption{Markings on the torus $E$}
  \label{rampoints}
\end{figure}

\begin{remark}\label{ramdata-pi1}
The covering $\pi_1 = \pi_1(n)$ has the following ramification data:
\begin{itemize}
\item
  over $A$: $(n)$
\item
  over $P$: $(1,n-1)$
\item
  over $Q$: $(2,\underbrace{1, \ldots, 1}_{n-2})$
\end{itemize}
and is unramified over all other points.
\end{remark}

\begin{theorem}\label{thm-orni}
The following construction gives an infinite sequence
of origamis $\Ornicov = \Ornicov(n)$ whose affine group act trivially on the lift of $H:=H_1^{(0)}(M_4,\mathbb{R})$ to $H_1(\Ornicov,\mathbb{R})$.\\
Let $n \geq 5$, $n$ odd, and let $\pi_1 = \pi_1(n)$ and $h$ be the coverings defined above.
Define $\tilde{\pi_2} = \pi_2 \circ h$. Take now the fake fibre product 
$q = \pi_1 \times \tilde{\pi}_2: \Ornicov \to E$ of $\pi_1$ and $\tilde{\pi}_2$. 
This comes by definitions
with two coverings $\tilde{q}_1: \Ornicov \to \Ornitwo$ and $q_2: \Ornicov \to Y$ 
such that $q =  \tilde{\pi}_2 \circ \tilde{q}_1 = \pi_1 \circ q_2$. We choose
on $E$ the translation structure coming from the identification
$E =  \C/(6\Z\oplus 6\Z i)$ and
denote its pullback to $\Ornicov$ via $q$ by $\omega$.
\[
\xymatrix{
  \Ornicov \ar^{q_2}[rr] \ar^{\tilde{q_1}}[d]&& Y \ar^{\pi_1}[d]\\
  \Ornitwo \ar^h[r] &\Orni \ar^{\pi_2}[r] & E
}
\]
\end{theorem}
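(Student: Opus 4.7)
The plan is to verify the three hypotheses A), B), C) of Proposition~\ref{jwe} for the covering $q_1 := h \circ \tilde{q}_1 : \Ornicov \to \Orni$, after which part ii) of that proposition immediately yields the theorem. As a preliminary step, one has to check that $\Ornicov$ is connected, equivalently that the monodromy of $q$ acts transitively on $\{1,\dots,n\} \times \{1,\dots,6\}$; I would verify this by combining the loops around $\textbf{X}, \textbf{Y}, \textbf{Z}$ (where $\pi_1$ is unramified, and whose $\tilde{\pi}_2$-monodromies together act transitively on the second factor because $\Ornitwo$ is connected) with the loop $\gamma_A$ around $A$ (whose $\pi_1$-monodromy is the $n$-cycle $\sigma_a$), so that one can shift the first coordinate arbitrarily and then correct the resulting change in the second coordinate.

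The heart of the proof is the computation of the ramification data of $q = \pi_1 \times \tilde{\pi}_2$ at the six branch points via the standard cycle-product rule: if the monodromy around a branch point $x$ contains a cycle of length $\ell_1$ through $a$ in $\pi_1$ and a cycle of length $\ell_2$ through $b$ in $\tilde{\pi}_2$, then each such pair of cycles contributes $\gcd(\ell_1, \ell_2)$ cycles of length $\operatorname{lcm}(\ell_1, \ell_2)$ to the combined monodromy. Plugging in the data of Remarks~\ref{ramdata-pi2tilde} and~\ref{ramdata-pi1} and using that $\gcd(n,2) = 1$ (valid since $n$ is odd), I obtain for $q$: data $(2n, 2n, n, n)$ over $A$; $2n$ copies of $3$ over each of $\textbf{X}, \textbf{Y}, \textbf{Z}$; six copies of $n-1$ and six copies of $1$ over $P$; six copies of $2$ and $6(n-2)$ copies of $1$ over $Q$. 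For $n \geq 5$ these four multisets are pairwise distinct and all differ from the trivial data at unramified points, which establishes condition~B).

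For condition~A), I refine the computation to track the ramification of $q_1 = h \circ \tilde{q}_1$ over each of $A_1, A_2, A_3 \in \Orni$. From Remark~\ref{ramdata-pi2tilde} and the construction in Figure~\ref{Ornitwo}, the covering $h$ has ramification $(2)$ over $A_1$ and over $A_2$ and $(1,1)$ over $A_3$. Since $\pi_1$ has a unique preimage $a_0$ above $A$, each preimage $\tilde{A}$ of $A$ in $\Ornitwo$ has a single preimage $(a_0, \tilde{A})$ in $\Ornicov$, whose ramification index under $\tilde{q}_1$ equals the cycle length of $a_0$ under $\sigma_a^{e(\tilde{\pi}_2, \tilde{A})}$; as $n$ is odd, this equals $n$ regardless of whether $e(\tilde{\pi}_2, \tilde{A})$ is $1$ or $2$. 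Multiplying by $e(h, \tilde{A})$ gives $q_1$-data $(2n)$ over $A_1$ and over $A_2$ but $(n, n)$ over $A_3$, establishing condition~A).

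Finally, for condition~C), any affine diffeomorphism of $\Ornicov$ descends via $q$ (by the universality of the torus) to an affine self-map $g$ of $E$; condition~B) forces $g$ to fix the three points $A, P, Q$ pointwise. Since $g$ fixes $A = 0$ it has no translational part, and fixing $P = (1,0)$ and $Q = (0,1)$ on $E = \mathbb{C}/(6\mathbb{Z} \oplus 6\mathbb{Z} i)$ amounts to $D(g) \equiv I \pmod 6$, i.e.\ $D(g) \in \Gamma(6)$. With A), B), C) in hand, Proposition~\ref{jwe} delivers the theorem. The main obstacle I anticipate is the combinatorial bookkeeping in the fake fibre product and identifying the asymmetric ramification of $h$ over $A_1, A_2, A_3$, which requires a careful reading of Figure~\ref{Ornitwo}; the hypothesis that $n \geq 5$ is odd is precisely what ensures both $\gcd(n, 2) = 1$ in the computation over $A$ and the pairwise distinctness of the ramification multisets needed for~B).
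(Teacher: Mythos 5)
Your proposal follows the paper's own proof almost step for step: you verify conditions A), B) and C) of Proposition~\ref{jwe} for $q_1 = h\circ\tilde{q}_1$ via exactly the ramification computations the paper records in Remarks~\ref{ramdata-pi2tilde}, \ref{ramdata-pi1} and \ref{ramdata} (the $\gcd$/$\operatorname{lcm}$ rule for the fake fibre product, the data $(2n,2n,n,n)$ over $A$, the dichotomy $(2n)$ over $A_1,A_2$ versus $(n,n)$ over $A_3$ which uses $n$ odd, and the role of $n\geq 5$ in separating the data over $P$ and $Q$ --- note that at $n=3$ those two multisets coincide, which is exactly why the paper excludes it), and you conclude the $\Gamma(6)$ statement and the theorem via Proposition~\ref{jwe} just as the paper does. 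A cosmetic slip: the fixing of $P$ and $Q$ is not literally part of condition~B) of Proposition~\ref{jwe}, which concerns only $A$; as in the paper, it follows from the ramification data of $q$ over $P$ and $Q$ being distinct from each other and from those over all other six-division points, which your computation does supply, so only the attribution is off.

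The one genuine gap is in your connectedness argument. You assert that the $\tilde{\pi}_2$-monodromies of small loops around $\textbf{X}$, $\textbf{Y}$, $\textbf{Z}$ act transitively on the six sheets ``because $\Ornitwo$ is connected.'' That is a non sequitur: connectedness of $\Ornitwo$ says that the \emph{full} monodromy group of $\tilde{\pi}_2$ --- generated by the images of $x$ and $y$ together with all the local loops --- is transitive, not that the subgroup generated by these three local monodromies alone is. Each of them is a product of two $3$-cycles covering the cyclic $\mathbb{Z}/3$-action of $\pi_2$, and a priori they could all preserve the partition of the six sheets into the pairs of $h$-sheets over each $\pi_2$-sheet and act with two orbits of size $3$; in fact, with the natural connecting paths the loops around $\textbf{X}$ and $\textbf{Y}$ \emph{do} preserve this partition, and only the loop around $\textbf{Z}$ mixes it, so transitivity genuinely hinges on an explicit computation on Figure~\ref{Ornitwo}. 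This is precisely what the paper supplies, choosing instead the loops $x^6$ and $y^2x^6y^{-2}$ (which avoid the slit edges $a,\dots,e$ and hence have trivial $\rho_1$-image) and computing $\rho_2(x^6)=(1,5,6)(2,3,4)$ and $\rho_2(y^2x^6y^{-2})=(1,3,2)(4,6,5)$, which visibly generate a transitive group. Your device of squaring the loop around $A$ --- so that $\rho_2(\gamma_A)^2=\mathrm{id}$ while $\sigma_a^2$ remains an $n$-cycle because $n$ is odd --- is a valid substitute for the paper's appeal to transitivity of $\rho_1$ on $\{1,\dots,n\}$; once the transitivity of the second-factor monodromies is actually verified (it is true: with suitable paths one obtains $(1,2,3)(4,5,6)$ around $\textbf{X}$ and $(1,5,6)(2,3,4)$ around $\textbf{Z}$), your argument closes. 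So the gap is local and reparable, but as written the connectedness step is unjustified.
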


Before giving the proof we study the ramification
data of the maps which we have just defined.
It follows from the definition of the fibre product that
the ramification data of the map $q$ over a point $p \in E$
are obtained from the ramification data of $\pi_1$ and $\tilde{\pi}_2$ over $p$, respectively.
More precisely we have that each pair of points $(p_1,p_2)$ in  $\pi_1^{-1}(p) \times \tilde{\pi}_2^{-1}(p)$,
where $p_1$ has ramification index $e$ with respect to $\pi_1$ and  $p_2$ has ramification index
$f$ with respect to $\tilde{\pi}_2$, produces $\frac{ef}{\lcmfs(e,f)} = \gcd(e,f)$
preimages of $p$ on $\Ornicov$ with ramification index $\lcmfs(e,f)$.\\ 
Similarly, we can read off the ramification
data of the map $\tilde{q}_1$ over a point $p_2$ in the fibre $ \tilde{\pi}_2^{-1}(p)$
from the ramification data of $\pi_1$ over $p$ and the ramification index $f$
of $\tilde{\pi}_2$ in $p_2$. More precisely each point $p_1$ in $\pi_1^{-1}(p)$ of ramification
index $e$ with respect to $\pi_1$ 
produces preimages with ramification index $\frac{\lcmfs(e,f)}{f}$ with
multiplicity $\frac{f\cdot ef}{f\lcmfs(e,f)} = \gcd(e,f)$.
In particular, if $\tilde{\pi}_2$ is unramified in $p_2$,
then the ramification data of $\tilde{q}_1$ over $p_2$ are equal to those of $\pi_1$ over $p$.\\
Let finally $\hat{A}_1$ and $\hat{A}_2$ be the preimage of $A_1$ and $A_2$ under $h$, respectively.
Let $\hat{A}_3^1$ and $\hat{A}_3^2$ be the two preimages of $A_3$.

\begin{remark}\label{ramdata}
From the previous considerations and Remark~\ref{ramdata-pi2tilde} and Remark~\ref{ramdata-pi1} 
we obtain the 
ramification data for $q$, $\tilde{q}_1$ and $q_1$.
  \begin{enumerate}
  \item[i)]
    The map $q$ has the following ramification data:
    \begin{itemize}
    \item over $A$: 
      \begin{tabular}[t]{l}
        %$(\underbrace{\lcmfs(2,n),\ldots,\lcmfs(2,n)}_{2\gcd(2,n)},n,n)$, i.e.:\\
        $(2n,2n,n,n)$, if $n$ is odd;\\
        $(n,n,n,n,n,n)$, if $n$ is even.
      \end{tabular}
    \item over $\textbf{X}$, $\textbf{Y}$ and $\textbf{Z}$:
      $(\underbrace{3, \ldots, 3}_{2n})$
    \item over $P$:
      $(\underbrace{1, \ldots, 1}_6, \underbrace{n-1,\ldots, n-1}_6)$
    \item over $Q$:
      $(\underbrace{2,\ldots, 2}_6, \underbrace{1,\ldots,1}_{6n-12})$

    \end{itemize}
  \item[ii)]
    The map $\tilde{q}_1$ has the following ramification data:
    \begin{itemize}
    \item
      over $\hat{A}_1$ and $\hat{A}_2$: $(n)$, if $n$ is odd and $(\frac{n}{2},\frac{n}{2})$, if $n$ is even.
    \item
      over $\hat{A}_3^1$ and $\hat{A}_3^2$: $(n)$
    \item
      over each preimage of $\textbf{X}$, $\textbf{Y}$ or $\textbf{Z}$:$\underbrace{(1,\ldots, 1)}_{n}$
    \item
      over each preimage of  $P$: $(1,n-1)$
    \item
      over each preimage of $Q$: $(2,\underbrace{1, \ldots, 1}_{n-2})$.
    \end{itemize}
  \item[iii)]
    We finally obtain the ramification data of $q_1 =  h\circ \tilde{q}_1$:
    \begin{itemize}
    \item
      over $A_1$ and $A_2$: $(2n)$, if $n$ is odd and $(n,n)$, if $n$ is even.
    \item
      over $A_3$: $(n,n)$
    \item
      over the preimage of $\textbf{X}$, $\textbf{Y}$ or $\textbf{Z}$:$\underbrace{(1,\ldots, 1)}_{2n}$
    \item
      over each preimage of  $P$: $(1,1,n-1,n-1)$
    \item
    over each preimage of $Q$: $(2,2,\underbrace{1, \ldots, 1}_{2n-4})$.
    \end{itemize}
  \end{enumerate}
\end{remark}

\begin{proof}[Proof of Theorem~\ref{thm-orni}]
We first show that the fake fibre product $\Ornicov$ is connected
and then that $q_1 = h \circ \tilde{q}_1$ satisfies the assumptions 
of Proposition~\ref{jwe}.\\[2mm]
\textbf{Connectedness of $\Ornicov$}:
Let $\rho_1$ be the monodromy map of $\pi_1$ and $\rho_2$ that of
$\tilde{\pi}_2$. To make notation easier we remove the full set of  
six-division points from $E$ and call the resulting surface $E^{*}$.
Hence $\rho_1$ and $\rho_2$ are maps from the fundamental group $\pi_1(E^*)$
to $S_{n}$ and $S_6$, respectively. Also for the sake of simpler
notations we choose the base point of $\pi_1(E^*)$  in 
Square 7, see Figure~\ref{rampoints}. We label its preimages under $\pi_1$ 
on $Y = Y_n$ by the number of the corresponding sheet they lie in.
For the map $\tilde{\pi}_ 2$, observe that the six preimages of the base point lie 
in the squares labelled by $8$, $10$, $12$,
$20$, $22$, $24$, see Figure~\ref{Ornitwo}.
We label these preimages by the label $1$, $2$, $3$, \ldots, $6$, respectively.
We then have for the closed curves $x^6$ and $y^2x^6y^{-2}$ in $\pi_1(E^*)$ that
 $\rho_1(x^6)$ and  $\rho_1(y^2x^6y^{-2})$ are both trivial, since 
both paths do not cross one of the edges $a$, $b$, $c$, $d$ and $e$ (see Figure~\ref{rampoints}).
Furthermore, we have
\[ \rho_2(x^6) = (1,5,6)(2,3,4) \mbox{ and } \rho_2(y^2x^6y^{-2}) = (1,3,2)(4,6,5).\]
In particular
 $\rho_2(x^6)$ and $\rho_2(y^2x^6y^{-2})$ act transitively on $\{1,\ldots, 6\}$. Since $\rho_1$
acts transitively on $\{1,\ldots, n\}$ and furthermore trivially on the two elements
$x^6$ and $y^2x^6y^{-2}$, the action of the product $\rho_1 \times \rho_2$ is transitive
on $\{1, \ldots, n\}\times\{1, \ldots, 6\}$.
Hence $Y_n$ is connected.\\

\noindent
\textbf{Assumptions of Proposition~\ref{jwe}}: 
By Remark~\ref{ramdata}, $q_1$ satisfies the conditions A) and B) in Proposition~\ref{jwe}.
Consider an affine homeomorphism $f$ of $\Ornicov$. Let $g$ be its descend
to $E$ via $q$. By condition A), the map $g$ fixes the point $A$. By Remark~\ref{ramdata},
the ramification data of $q$ over $P$ and $Q$ are different from each other and different
from all other six-division points. Therefore $g$ also fixes $P$ and $Q$
and hence $D(f) = D(g)$ lie in $\Gamma(6)$. Thus we have $\Gamma(\Ornicov) \subseteq \Gamma(6)$
and also Condition C) from  Proposition~\ref{jwe} is fulfilled. 
 \end{proof}

We directly obtain from Remark~\ref{ramdata} the stratum of the surfaces $\Ornicov(n)$.

\begin{remark}
The origami $\Ornicov(n)$ with odd $n \geq 5$  lies in the stratum 
\[\mathcal{H}(2n-1,2n-1,n-1,n-1,\underbrace{n-2, \ldots, n-2}_{6}, \underbrace{2, \ldots, 2}_{6n},\underbrace{1, \ldots, 1}_{6})\]
and its genus is $12n-4$. 
\end{remark}

\acks{ The authors are thankful to Alex Eskin, Giovanni Forni, Pascal Hubert and Anton Zorich for useful discussions related to this note (including, of course, the questions at the origin of this text). The first author
is partially supported by the Landesstiftung Baden-W\"urttemberg (within the program {\em Junior\-pro\-fessu\-ren-Programm}).
The second author was partially supported by the French ANR grant ``GeoDyM'' (ANR-11-BS01-0004) and by the 
Balzan Research Project of J. Palis.}

\newpage

\begin{appendix}
\section{A picture of the origami \boldmath{$X$}}\label{a.A}

In Figure~\ref{all} below 
we show the full origami $X$ from Definition \ref{bigone}
which has 512 squares.\\[5mm]

%\newpage 
%\begin{center}
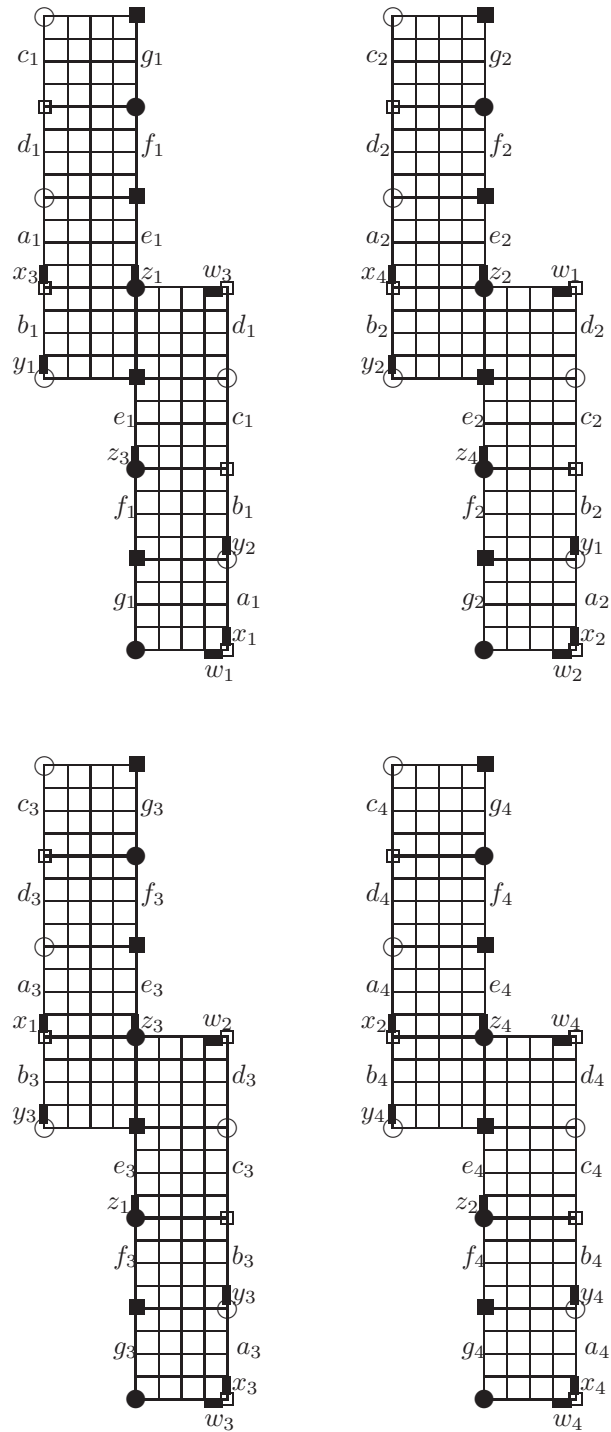
\begin{figure}[h!]
  \setlength{\unitlength}{1.2cm}
\begin{center}
  \begin{picture}(2,7)
    \put(1,0){\framebox(1,1){}}
    \put(1,1){\framebox(1,1){}}
    \put(1,2){\framebox(1,1){}}
    \put(1,3){\framebox(1,1){}}
    \put(0,3){\framebox(1,1){}}
    \put(0,4){\framebox(1,1){}}
    \put(0,5){\framebox(1,1){}}
    \put(0,6){\framebox(1,1){}}
    \put(1.94,-.06){\framebox(.12,.12){}}
    \put(1.94,1.94){\framebox(.12,.12){}}
    \put(1.94,3.94){\framebox(.12,.12){}}
    \put(-.06,3.94){\framebox(.12,.12){}}
    \put(-.06,5.94){\framebox(.12,.12){}}
    \put(2,1){\circle{.2}}
    \put(0,3){\circle{.2}}
    \put(0,5){\circle{.2}}
    \put(0,7){\circle{.2}}
    \put(2,3){\circle{.2}}
    \put(1,0){\circle*{.2}}
    \put(1,2){\circle*{.2}}
    \put(1,4){\circle*{.2}}
    \put(1,6){\circle*{.2}}    
    \put(0.93,0.93){\rule{2mm}{2mm}}
    \put(0.93,2.93){\rule{2mm}{2mm}}
    \put(0.93,4.93){\rule{2mm}{2mm}}
    \put(0.93,6.93){\rule{2mm}{2mm}}
    \put(0.25,3){\line(0,1){4} }
    \put(0.5,3){\line(0,1){4} }
    \put(0.75,3){\line(0,1){4} }
    \put(1.25,0){\line(0,1){4} }
    \put(1.5,0){\line(0,1){4} }
    \put(1.75,0){\line(0,1){4} }
    \put(0,6.25){\line(1,0){1} }
    \put(0,6.5){\line(1,0){1} }
    \put(0,6.75){\line(1,0){1} }
    \put(0,5.25){\line(1,0){1} }
    \put(0,5.5){\line(1,0){1} }
    \put(0,5.75){\line(1,0){1} }
    \put(0,4.25){\line(1,0){1} }
    \put(0,4.5){\line(1,0){1} }
    \put(0,4.75){\line(1,0){1} }
    \put(0,3.25){\line(1,0){2} }
    \put(0,3.5){\line(1,0){2} }    
    \put(0,3.75){\line(1,0){2} }
    \put(1,.25){\line(1,0){1} }
    \put(1,.5){\line(1,0){1} }
    \put(1,.75){\line(1,0){1} }
    \put(1,1.25){\line(1,0){1} }
    \put(1,1.5){\line(1,0){1} }
    \put(1,1.75){\line(1,0){1} }
    \put(1,2.25){\line(1,0){1} }
    \put(1,2.5){\line(1,0){1} }
    \put(1,2.75){\line(1,0){1} }
    \put(2.1,0.5){$a_1$}
    \put(-.3,4.5){$a_1$}
    \put(2.05,1.5){$b_1$}
    \put(-.3,3.5){$b_1$}
    \put(2.05,2.5){$c_1$}
    \put(-.3,6.5){$c_1$}
    \put(2.05,3.5){$d_1$}
    \put(-.3,5.5){$d_1$}
    \put(1.05,4.5){$e_1$}
    \put(.75,2.5){$e_1$}
    \put(1.05,5.5){$f_1$}
    \put(.75,1.5){$f_1$}
    \put(1.05,6.5){$g_1$}
    \put(.75,0.5){$g_1$}
    \put(2.05,.1){$x_1$}
    \put(-.35,4.1){$x_3$}
    \put(2.05,1.1){$y_2$}
    \put(-.35,3.1){$y_1$}
    \put(1.05,4.1){$z_1$}
    \put(.68,2.1){$z_3$}
    \put(1.75,-.3){$w_1$}
    \put(1.73,4.12){$w_3$}
    \linethickness{1mm}
    \put(1.99,0.05){\line(0,1){.2}}
    \put(-0.01,4.05){\line(0,1){.2}}
    \put(1.99,1.05){\line(0,1){.2}}
    \put(-0.01,3.05){\line(0,1){.2}}
    \put(.99,4.05){\line(0,1){.2}}
    \put(.99,2.05){\line(0,1){.2}}
    \put(1.75,-.05){\line(1,0){.2}}
    \put(1.75,3.95){\line(1,0){.2}}
    \linethickness{0.2mm}
  \end{picture}
  \hspace*{20mm}
  \begin{picture}(2,7)
    \put(1,0){\framebox(1,1){}}
    \put(1,1){\framebox(1,1){}}
    \put(1,2){\framebox(1,1){}}
    \put(1,3){\framebox(1,1){}}
    \put(0,3){\framebox(1,1){}}
    \put(0,4){\framebox(1,1){}}
    \put(0,5){\framebox(1,1){}}
    \put(0,6){\framebox(1,1){}}
    \put(1.94,-.06){\framebox(.12,.12){}}
    \put(1.94,1.94){\framebox(.12,.12){}}
    \put(1.94,3.94){\framebox(.12,.12){}}
    \put(-.06,3.94){\framebox(.12,.12){}}
    \put(-.06,5.94){\framebox(.12,.12){}}
    \put(2,1){\circle{.2}}
    \put(0,3){\circle{.2}}
    \put(0,5){\circle{.2}}
    \put(0,7){\circle{.2}}
    \put(2,3){\circle{.2}}
    \put(1,0){\circle*{.2}}
    \put(1,2){\circle*{.2}}
    \put(1,4){\circle*{.2}}
    \put(1,6){\circle*{.2}}    
    \put(0.93,0.93){\rule{2mm}{2mm}}
    \put(0.93,2.93){\rule{2mm}{2mm}}
    \put(0.93,4.93){\rule{2mm}{2mm}}
    \put(0.93,6.93){\rule{2mm}{2mm}}
    \put(0.25,3){\line(0,1){4} }
    \put(0.5,3){\line(0,1){4} }
    \put(0.75,3){\line(0,1){4} }
    \put(1.25,0){\line(0,1){4} }
    \put(1.5,0){\line(0,1){4} }
    \put(1.75,0){\line(0,1){4} }
    \put(0,6.25){\line(1,0){1} }
    \put(0,6.5){\line(1,0){1} }
    \put(0,6.75){\line(1,0){1} }
    \put(0,5.25){\line(1,0){1} }
    \put(0,5.5){\line(1,0){1} }
    \put(0,5.75){\line(1,0){1} }
    \put(0,4.25){\line(1,0){1} }
    \put(0,4.5){\line(1,0){1} }
    \put(0,4.75){\line(1,0){1} }
    \put(0,3.25){\line(1,0){2} }
    \put(0,3.5){\line(1,0){2} }    
    \put(0,3.75){\line(1,0){2} }
    \put(1,.25){\line(1,0){1} }
    \put(1,.5){\line(1,0){1} }
    \put(1,.75){\line(1,0){1} }
    \put(1,1.25){\line(1,0){1} }
    \put(1,1.5){\line(1,0){1} }
    \put(1,1.75){\line(1,0){1} }
    \put(1,2.25){\line(1,0){1} }
    \put(1,2.5){\line(1,0){1} }
    \put(1,2.75){\line(1,0){1} }
    \put(2.1,0.5){$a_2$}
    \put(-.3,4.5){$a_2$}
    \put(2.05,1.5){$b_2$}
    \put(-.3,3.5){$b_2$}
    \put(2.05,2.5){$c_2$}
    \put(-.3,6.5){$c_2$}
    \put(2.05,3.5){$d_2$}
    \put(-.3,5.5){$d_2$}
    \put(1.05,4.5){$e_2$}
    \put(.75,2.5){$e_2$}
    \put(1.05,5.5){$f_2$}
    \put(.75,1.5){$f_2$}
    \put(1.05,6.5){$g_2$}
    \put(.75,0.5){$g_2$}
    \put(2.05,.1){$x_2$}
    \put(-.35,4.1){$x_4$}
    \put(2.05,1.1){$y_1$}
    \put(-.35,3.1){$y_2$}
    \put(1.05,4.1){$z_2$}
    \put(.68,2.1){$z_4$}
    \put(1.75,-.3){$w_2$}
    \put(1.73,4.12){$w_1$}
    \linethickness{1mm}
    \put(1.99,0.05){\line(0,1){.2}}
    \put(-0.01,4.05){\line(0,1){.2}}
    \put(1.99,1.05){\line(0,1){.2}}
    \put(-0.01,3.05){\line(0,1){.2}}
    \put(.99,4.05){\line(0,1){.2}}
    \put(.99,2.05){\line(0,1){.2}}
    \put(1.75,-.05){\line(1,0){.2}}
    \put(1.75,3.95){\line(1,0){.2}}
    \linethickness{0.2mm}
  \end{picture}\\[15mm]
  \begin{picture}(2,7)
    \put(1,0){\framebox(1,1){}}
    \put(1,1){\framebox(1,1){}}
    \put(1,2){\framebox(1,1){}}
    \put(1,3){\framebox(1,1){}}
    \put(0,3){\framebox(1,1){}}
    \put(0,4){\framebox(1,1){}}
    \put(0,5){\framebox(1,1){}}
    \put(0,6){\framebox(1,1){}}
    \put(1.94,-.06){\framebox(.12,.12){}}
    \put(1.94,1.94){\framebox(.12,.12){}}
    \put(1.94,3.94){\framebox(.12,.12){}}
    \put(-.06,3.94){\framebox(.12,.12){}}
    \put(-.06,5.94){\framebox(.12,.12){}}
    \put(2,1){\circle{.2}}
    \put(0,3){\circle{.2}}
    \put(0,5){\circle{.2}}
    \put(0,7){\circle{.2}}
    \put(2,3){\circle{.2}}
    \put(1,0){\circle*{.2}}
    \put(1,2){\circle*{.2}}
    \put(1,4){\circle*{.2}}
    \put(1,6){\circle*{.2}}    
    \put(0.93,0.93){\rule{2mm}{2mm}}
    \put(0.93,2.93){\rule{2mm}{2mm}}
    \put(0.93,4.93){\rule{2mm}{2mm}}
    \put(0.93,6.93){\rule{2mm}{2mm}}
    \put(0.25,3){\line(0,1){4} }
    \put(0.5,3){\line(0,1){4} }
    \put(0.75,3){\line(0,1){4} }
    \put(1.25,0){\line(0,1){4} }
    \put(1.5,0){\line(0,1){4} }
    \put(1.75,0){\line(0,1){4} }
    \put(0,6.25){\line(1,0){1} }
    \put(0,6.5){\line(1,0){1} }
    \put(0,6.75){\line(1,0){1} }
    \put(0,5.25){\line(1,0){1} }
    \put(0,5.5){\line(1,0){1} }
    \put(0,5.75){\line(1,0){1} }
    \put(0,4.25){\line(1,0){1} }
    \put(0,4.5){\line(1,0){1} }
    \put(0,4.75){\line(1,0){1} }
    \put(0,3.25){\line(1,0){2} }
    \put(0,3.5){\line(1,0){2} }    
    \put(0,3.75){\line(1,0){2} }
    \put(1,.25){\line(1,0){1} }
    \put(1,.5){\line(1,0){1} }
    \put(1,.75){\line(1,0){1} }
    \put(1,1.25){\line(1,0){1} }
    \put(1,1.5){\line(1,0){1} }
    \put(1,1.75){\line(1,0){1} }
    \put(1,2.25){\line(1,0){1} }
    \put(1,2.5){\line(1,0){1} }
    \put(1,2.75){\line(1,0){1} }
   \put(1,2.75){\line(1,0){1} }
    \put(2.1,0.5){$a_3$}
    \put(-.3,4.5){$a_3$}
    \put(2.05,1.5){$b_3$}
    \put(-.3,3.5){$b_3$}
    \put(2.05,2.5){$c_3$}
    \put(-.3,6.5){$c_3$}
    \put(2.05,3.5){$d_3$}
    \put(-.3,5.5){$d_3$}
    \put(1.05,4.5){$e_3$}
    \put(.75,2.5){$e_3$}
    \put(1.05,5.5){$f_3$}
    \put(.75,1.5){$f_3$}
    \put(1.05,6.5){$g_3$}
    \put(.75,0.5){$g_3$}
    \put(2.05,.1){$x_3$}
    \put(-.35,4.1){$x_1$}
    \put(2.05,1.1){$y_3$}
    \put(-.35,3.1){$y_3$}
    \put(1.05,4.1){$z_3$}
    \put(.68,2.1){$z_1$}
    \put(1.75,-.3){$w_3$}
    \put(1.73,4.12){$w_2$}
    \linethickness{1mm}
    \put(1.99,0.05){\line(0,1){.2}}
    \put(-0.01,4.05){\line(0,1){.2}}
    \put(1.99,1.05){\line(0,1){.2}}
    \put(-0.01,3.05){\line(0,1){.2}}
    \put(.99,4.05){\line(0,1){.2}}
    \put(.99,2.05){\line(0,1){.2}}
    \put(1.75,-.05){\line(1,0){.2}}
    \put(1.75,3.95){\line(1,0){.2}}
    \linethickness{0.2mm}
  \end{picture}
  \hspace*{20mm}
  \begin{picture}(2,7)
    \put(1,0){\framebox(1,1){}}
    \put(1,1){\framebox(1,1){}}
    \put(1,2){\framebox(1,1){}}
    \put(1,3){\framebox(1,1){}}
    \put(0,3){\framebox(1,1){}}
    \put(0,4){\framebox(1,1){}}
    \put(0,5){\framebox(1,1){}}
    \put(0,6){\framebox(1,1){}}
    \put(1.94,-.06){\framebox(.12,.12){}}
    \put(1.94,1.94){\framebox(.12,.12){}}
    \put(1.94,3.94){\framebox(.12,.12){}}
    \put(-.06,3.94){\framebox(.12,.12){}}
    \put(-.06,5.94){\framebox(.12,.12){}}
    \put(2,1){\circle{.2}}
    \put(0,3){\circle{.2}}
    \put(0,5){\circle{.2}}
    \put(0,7){\circle{.2}}
    \put(2,3){\circle{.2}}
    \put(1,0){\circle*{.2}}
    \put(1,2){\circle*{.2}}
    \put(1,4){\circle*{.2}}
    \put(1,6){\circle*{.2}}    
    \put(0.93,0.93){\rule{2mm}{2mm}}
    \put(0.93,2.93){\rule{2mm}{2mm}}
    \put(0.93,4.93){\rule{2mm}{2mm}}
    \put(0.93,6.93){\rule{2mm}{2mm}}
    \put(0.25,3){\line(0,1){4} }
    \put(0.5,3){\line(0,1){4} }
    \put(0.75,3){\line(0,1){4} }
    \put(1.25,0){\line(0,1){4} }
    \put(1.5,0){\line(0,1){4} }
    \put(1.75,0){\line(0,1){4} }
    \put(0,6.25){\line(1,0){1} }
    \put(0,6.5){\line(1,0){1} }
    \put(0,6.75){\line(1,0){1} }
    \put(0,5.25){\line(1,0){1} }
    \put(0,5.5){\line(1,0){1} }
    \put(0,5.75){\line(1,0){1} }
    \put(0,4.25){\line(1,0){1} }
    \put(0,4.5){\line(1,0){1} }
    \put(0,4.75){\line(1,0){1} }
    \put(0,3.25){\line(1,0){2} }
    \put(0,3.5){\line(1,0){2} }    
    \put(0,3.75){\line(1,0){2} }
    \put(1,.25){\line(1,0){1} }
    \put(1,.5){\line(1,0){1} }
    \put(1,.75){\line(1,0){1} }
    \put(1,1.25){\line(1,0){1} }
    \put(1,1.5){\line(1,0){1} }
    \put(1,1.75){\line(1,0){1} }
    \put(1,2.25){\line(1,0){1} }
    \put(1,2.5){\line(1,0){1} }
    \put(1,2.75){\line(1,0){1} }
    \put(2.1,0.5){$a_4$}
    \put(-.3,4.5){$a_4$}
    \put(2.05,1.5){$b_4$}
    \put(-.3,3.5){$b_4$}
    \put(2.05,2.5){$c_4$}
    \put(-.3,6.5){$c_4$}
    \put(2.05,3.5){$d_4$}
    \put(-.3,5.5){$d_4$}
    \put(1.05,4.5){$e_4$}
    \put(.75,2.5){$e_4$}
    \put(1.05,5.5){$f_4$}
    \put(.75,1.5){$f_4$}
    \put(1.05,6.5){$g_4$}
    \put(.75,0.5){$g_4$}
    \put(2.05,.1){$x_4$}
    \put(-.35,4.1){$x_2$}
    \put(2.05,1.1){$y_4$}
    \put(-.35,3.1){$y_4$}
    \put(1.05,4.1){$z_4$}
    \put(.68,2.1){$z_2$}
    \put(1.75,-.3){$w_4$}
    \put(1.73,4.12){$w_4$}
    \linethickness{1mm}
    \put(1.99,0.05){\line(0,1){.2}}
    \put(-0.01,4.05){\line(0,1){.2}}
    \put(1.99,1.05){\line(0,1){.2}}
    \put(-0.01,3.05){\line(0,1){.2}}
    \put(.99,4.05){\line(0,1){.2}}
    \put(.99,2.05){\line(0,1){.2}}
    \put(1.75,-.05){\line(1,0){.2}}
    \put(1.75,3.95){\line(1,0){.2}}
    \linethickness{0.2mm}
  \end{picture}
\end{center}
  \caption{The origami $X$ from Definition~\ref{bigone}}
  \label{all}
\end{figure}

\end{appendix}

\newpage
%----------------------------------------------------------------

\end{document}